\newtheorem {Proposition}{Proposition}[section]
\newtheorem {Lemma}[Proposition] {Lemma}
\newtheorem {Theorem}[Proposition]{Theorem}
\newtheorem {Corollary}[Proposition]{Corollary}
\newtheorem {Remark}[Proposition]{Remark}
\def\N{\mathbb{N}}
\def\R{\mathbb{R}}
\def\E{\mathbb{E}}
\begin{document}
\begin{frontmatter}
\title{An improved central limit theorem and fast convergence rates for entropic transportation costs}

\runtitle{CLT  and fast convergence rates for entropic transportation costs}

\begin{aug}

\author[A]{\fnms{Eustasio} \snm{del Barrio}\ead[label=e1]{tasio@eio.uva.es}},
\author[B]{\fnms{Alberto} \snm{Gonz\'alez-Sanz}\ead[ label=e2]{alberto.gonzalezsanz@math.univ-toulouse.fr}},
\author[C]{\fnms{Jean-Michel} \snm{Loubes}\ead[label=e3]{ loubes@math.univ-toulouse.fr}}
\and
\author[D]{\fnms{Jonathan} \snm{Niles-Weed}\ead[label=e4]{ jnw@cims.nyu.edu}}
\address[A]{IMUVA, Universidad de Valladolid, Spain.
\printead{e1}}

\address[B]{IMT, Universit\'e de Toulouse,
France. \printead{e2}}
\address[C]{IMT, Universit\'e de Toulouse,
France. \printead{e3}}
\address[D]{Courant Courant Institute for Mathematical Sciences and Center for Data Science, New York University 
\printead{e4}}
\end{aug}

\begin{keyword}
\kwd{Central Limit Theorem}
\kwd{Entropic regularization}
\kwd{Sinkhorn divergence}
\end{keyword}
\begin{abstract}
We prove a central limit theorem for the entropic transportation cost between subgaussian probability measures, centered at the population cost. This is the first result which allows for asymptotically valid inference for entropic optimal transport between measures which are not necessarily discrete. In the compactly supported case, we complement these results with new, faster, convergence rates for the expected entropic transportation cost between empirical measures. Our proof is based on strengthening convergence results for dual solutions to the entropic optimal transport problem.
\end{abstract}
\end{frontmatter}

\section{Introduction.}
Optimal transport has emerged as a leading methodology in many areas of data science, machine learning, and statistics~\cite{RigWee18a,HutRig21,20-AOS1996,GhoSen19,deb2019multivariate,HalMorSeg21,MorSze20,AbaImb06, Tod10, MorHar06,BlaKanMur19,KuhEsfNgu19,FlaCutCou18,GaoKle16,GorDelFab19, Barrio2019ACL,BlaYeoFre20, ChiPac21, ChiJiaSte20,LiZhaLuo20, CouFlaTui17, RedHabSeb17,GraJouBer19,AlvJaa18,delara2021transportbased}, with applications in fields ranging from high-energy physics~\cite{RomCasMil20, KomMetTha19}  to computational biology~\cite{schiebinger2019optimal,dai2018autoencoder,shi2020rate,shi2021center,MR4399094}.
Central to its recent success in practice is the paradigm of entropic regularization, popularized by \cite{Cut13}, which leads to a highly efficient parallelizable algorithm suitable for large-scale data analysis~\cite{peyreCuturi}.
This regularization is defined by augmenting the standard optimal transportation problem by a penalization term based on relative entropy, defined between two probability measures $\alpha$ and $\beta$ as $H(\alpha | \beta) = \int \log(\frac{d\alpha}{d\beta}(x))d\alpha(x)$ if $\alpha$ is absolutely continuous with respect to $\beta$, $\alpha\ll \beta$, and $+\infty$ otherwise.
Given $P, Q \in \mathcal{P}(\R^d)$ and $\epsilon > 0$, the resulting problem reads
\begin{equation}\label{kanto_entrop}
	S_{\epsilon}(P,Q)=\min_{\pi\in \Pi(P,Q)} \int_{\R^d\times \R^d} {\textstyle \frac{1}{2}}\|x-y\|^2 d\pi(x,y)+\epsilon H(\pi | P\times Q),
\end{equation}
where $\Pi(P, Q)$ denotes the set of couplings between $P$ and $Q$.

Alongside its computational virtues, entropic regularization also brings substantial statistical benefits: entropically regularized transportation costs enjoy faster convergence rates than their unregularized counterparts, even in high dimensions, making them useful for estimation tasks~\cite{genevay2019,Weed19,chizat2020faster,PooNil21}.
Moreover, entropic regularization seems well suited to problems involving data corrupted with Gaussian noise~\cite{RigWee18}.
Together, this body of results suggests the strengths of entropic optimal transport as an applied and theoretical statistical tool.

Obtaining limit theorems for the unregularized transportation costs is a longstanding question in probability theory and statistics.
(See the recent work~\cite{HunKlaSta22} for references and an account of the history of this problem.)
Under relatively stringent assumptions on the measures, it is known that the empirical unregularized transport costs possesses asymptotically Gaussian fluctuations around its expectation~\cite{delbarrio2019,delbarrio2021central}; stronger results can be obtained when one or both of the measures is discrete~\cite{delbarrio2021centraldisc,Sommerfeld2018,tameling2019}, when the measures are smooth~\cite{ManBalNil21}, and in one dimension~\cite{BarGinMat99,BarGinUtz05}.

The strict convexity and differentiability of the regularized optimal transportation problem makes it possible to prove significantly more general results.
A central limit theorem for entropy regularized transportation costs, centered at the expectation of the empirical cost, was first obtained by~\cite{Weed19} (see \eqref{mena_clt} below).
Generalizations and extensions for discrete measures have been proved by~\cite{KlaTamMun20,BigotCLT}.
A growing body of work investigates the properties of the entropy regularized optimal transport problem from the perspective of probability and analysis, including its asymptotic properties as $\epsilon \to 0$~\cite{NutWie21,EckNut21,GhoNutBer21,NutWie22,AltNilStr22,Ber20,CheGalHal17}, opening the door to further statistical applications of entropy regularised transport.

A crucial question in statistical applications of entropic optimal transport costs is the construction of asymptotic confidence intervals, to permit asymptotically valid inference.
The most general results known in this direction are due to~\cite{Weed19}, who showed that if $P$ and $Q$ are subgaussian probabilities on $\mathbb{R}^d$, then
\begin{equation}\label{mena_clt}
    \sqrt{n}(S_{\epsilon}(P_n,Q)-\E S_{\epsilon}(P_n,Q))\xrightarrow{w} N(0,\operatorname{Var}_P(f^*_\epsilon)),
\end{equation}
with $S_\epsilon(\cdot, \cdot)$ as in \eqref{kanto_entrop}.
(See Section~\ref{sec:prelim} for further background and definitions.)
A limitation of this result in practical inference problems is the centering at $\E S_\epsilon(P_n, Q)$ rather than at the population quantity $S_\epsilon(P, Q)$.
This result parallels known results for the unregularized transport cost: \cite{delbarrio2019,delbarrio2021central} show that, under suitable technical conditions on $P$ and $Q$, there exists $\sigma \geq 0$ such that
\begin{equation}
	\sqrt{n}(W_p^p(P_n, Q) - \E W_p^p(P_n, Q)) \xrightarrow{w} N(0, \sigma^2)\,,
\end{equation}
where $W_p$ denotes the unregularized $p$-transportation cost, $p > 1$. See also \cite{gonzalezdelgado2021twosample} for its generalization to the flat torus.
In this case, it is know that the centering at $\E W_p^p(P_n, Q)$ is unavoidable, and that it is not possible in general to replace $\E W_p^p(P_n, Q)$ by $W_p^p(P, Q)$, in view of the fact that known lower bounds on convergence rates of the Wasserstein distance imply that $\sqrt{n} (W_p^p(P_n, Q) - W_p^p(P, Q))$ is typically not stochastically bounded when $d > 2p$.

However, this limitation does \emph{not} apply to the entropically regularized transport costs.
Indeed, the results of \cite{Weed19} imply that
\begin{equation}\label{mean_deviation}
	|S_\epsilon(P, Q) - \E  S_{\epsilon}(P_n, Q)| \leq C_{P, Q} n^{-1/2}
\end{equation}
for a positive constant $C_{P, Q}$ depending on the two measures.
As a consequence, prior work does not rule out the possibility that $\sqrt{n}(S_\epsilon(P_n, Q) - S_\epsilon(P, Q))$ enjoys a central limit theorem, but nor does it provide a proof that such a theorem holds.

In this paper, we close this gap.
We show a central limit theorem of the form
\begin{equation}\label{main_eq}
	 \sqrt{n}(S_{\epsilon}(P_n,Q) - S_{\epsilon}(P,Q))\xrightarrow{w} N(0,\operatorname{Var}_P(f^*_\epsilon)),
\end{equation}
valid for any subgaussian probabilities $P$ and $Q$ in any dimension.
Prior to our work, such a bound was known only when $P$ and $Q$ were supported on a finite or countable set~\cite{KlaTamMun20,BigotCLT}. Also \cite{Harchaoui2020AsymptoticsOE} provides central limit theorems  for a different entropic regularization, where the solution is explicit.
Our results represent a significant generalization of these results, and imply that, under sufficiently strong moment conditions, asymptitcally valid inference is always possible for the entropic transportation cost.

Our proof of \eqref{main_eq} is based on an important strengthening of~\eqref{mean_deviation}.
Specifically, we show that, for subgaussian probability measures,
\begin{equation}\label{improved}
	|\E  S_\epsilon(P_n, Q) - S_{\epsilon}(P, Q)| = o(n^{-1/2}) \quad \text{as $n \to \infty$.}
\end{equation}
Combining this result with \eqref{mena_clt} yields \eqref{main_eq}.

When $P$ and $Q$ are supported on a bounded set $\Omega$, we are able to obtain substantially more precise results, which are of independent interest.
Our techniques imply that for compactly supported $P$ and $Q$,
\begin{equation*}
	|\E  S_\epsilon(P_n, Q) - S_{\epsilon}(P, Q)| \leq C_{P, Q} n^{-1}\,.
\end{equation*}
(See Remark~\ref{fast_rate}.)
This result implies that the bias of $S_\epsilon(P_n, Q)$ decays at the fast $n^{-1}$ rate, thereby recovering the rate typically obtained for \emph{parametric} estimation problems.
Our proof also yields new sample complexity results for the Sinkhorn divergence, defined as $D_\epsilon(P, Q) = S_\epsilon(P, Q) - \frac 12 (S_\epsilon(P, P) + S_\epsilon(Q, Q))$.
For probability measures on compact sets, convergence in Sinkhorn divergence is equivalent to weak convergence~\cite{Feydy2019InterpolatingBO}, implying that $D_\epsilon(P_n, P) \to 0$ a.s.
In Theorem~\ref{divergence_thm}, we show the quantitative bound
\begin{equation}
	\E  D_\epsilon(P_n, P) \leq C_{P} n^{-1},
\end{equation}
valid for all compactly supported $P$.
This convergence rate could have been anticipated from known distributional limits for Sinkhorn divergences between finitely supported measures~\cite{BigotCLT,KlaTamMun20}, but was unknown prior to our work.

In the bounded case, these results are all derived as corollaries of new convergence results for the  \emph{optimal dual potentials} in the entropic transport problem.
In Theorem \ref{Theorem_Potential}, we prove that, when $P$ and $Q$ are bounded, the entropic potentials converge fast in H\"older norm:
\begin{equation}\label{dual_converge}
\E \| g_n-g^*\|^2_{\mathcal{C}^s(\Omega)},\ \E \| f_n-f^*\|^2_{\mathcal{C}^s(\Omega)} \leq C_{P, Q} n^{-1}\,,
\end{equation}
where $s = [d/2]+1$.
We prove this result, as well as its two-sample analogue, in Section \ref{section3}.
To our knowledge, these bounds for the derivatives are new, even for finitely supported probability measures. Similar bounds for the potentials have been proved using completely different points of view, in \cite{luise2019sinkhorn}. It should be noted that the potentials' bound they provide is not enough to prove Theorem~\ref{divergence_thm}, at least with the technics exposed in this work.\\
When $P$ and $Q$ are not necessarily bounded but have subgaussian tails, we prove a non-quantitative analogue of \eqref{dual_converge}, showing that $f_n$ and $g_n$ converge to $f^*$ and $g^*$ almost surely in a suitably strong topology.
This result is a strengthening of a similar convergence result obtained by~\cite{Weed19}.

The remaining sections of this paper are organized as follows. Section~\ref{sec:prelim} provides some background results on entropic transportation costs. The central limit theorem \eqref{main_eq} and the faster rate \eqref{improved} are given in  Section~\ref{sec:TCL}. 
Section~\ref{section3} contains the announced results about the convergence rates of the potentials. The bounds for  Sinkhorn divergences are proved in Section~\ref{sec:divergences}. Finally we include a section with some numerical illustration of our limit theorems.

\section{Preliminaries on entropic transportation costs}\label{sec:prelim}
This selection collects several background results on the entropic transportation problem \eqref{kanto_entrop}.

We say that a distribution $\nu$ is the pushforward by a map $T$ of a distribution $\mu$, if $\nu=\mu\circ T^{-1}$. A simple computation shows that if $P^{\varepsilon}$ and $Q^{\varepsilon}$ denote the pushforwards of $P$ and $Q$ under the map $x\mapsto \varepsilon^{-\frac 1 2}x$ then $S_{\epsilon}(P,Q)=\varepsilon S_1(P^{\varepsilon},Q^{\varepsilon})$. Hence, we focus on the case $\varepsilon=1$ and write simply $S(P,Q)$ instead of $S_1(P,Q)$. 
The minimisation problem  \eqref{kanto_entrop} admits a dual formulation. In fact, if $\pi\in \Pi(P,Q)$ and $r=\frac{d\pi}{d(P\times Q)}$, then, for any $f\in L_1(P)$,
$g\in L_1(Q)$
\begin{eqnarray*}
	\lefteqn{\int [{\textstyle \frac{1}{2}}\|x-y\|^2 +\log r(x,y)] r(x,y)dP(x)dQ(y)\geq \int f(x) dP(x)+\int g(y)dQ(y)}\hspace*{6cm}\\
	&-&  \int  e^{{f(x)+g(y)-\frac 1 2\|x-y\|^2}}  dP(x)dQ(y)+1,
\end{eqnarray*}
with equality if and only if $r(x,y)=e^{f(x)+g(y)-\frac 1 2\|x-y\|^2}$ $P\times Q$-almost surely.
(This follows from the elementary fact that $s\log s\geq s-1$, $s>0$, with equality if and only if $s=1$).
This inequality implies the following version of weak duality:
\begin{eqnarray*}
	\lefteqn{S(P,Q)\geq \sup_{f\in L_1(P),g\in L_1(Q)} \Big\{\int_{\R^d} f(x) d P(x)+\int_{\R^d} g(y) d Q(y)}\hspace*{4cm}\\
	&-& \int_{\R^d\times \R^d}e^{{f(x)+g(y)- \frac{1}{2}\|x-y\|^2}} d P(x)dQ(y)+1\Big\}.
\end{eqnarray*}
It shows also that if $\frac{d\pi}{d(P\times Q)}=e^{{f(x)+g(y)-\frac 1 2\|x-y\|^2}}$ for some $f \in L_1(P_1)$ and $g \in L_1(Q)$, then $\pi$ is a minimizer for
the entropic transportation problem (indeed, by the strict convexity of $H$, it is the unique minimizer). The theory of entropic optimal transportation (see \cite{Csiszar1975,Nut21}) shows that the last inequality is, in fact, an equality, namely,
\begin{eqnarray}\nonumber
	\lefteqn{ S(P,Q)= \sup_{f\in L_1(P),g\in L_1(Q)} \Big\{\int_{\R^d} f(x) d P(x)+\int_{\R^d} g(y) d Q(y)}\hspace*{4cm}\\
	\label{dual_entrop}
	&-&\int_{\R^d\times \R^d}e^{{f(x)+g(y)- \frac{1}{2}\|x-y\|^2}} d P(x)dQ(y)+1\Big\}.
\end{eqnarray}
Maximizing pairs in \eqref{dual_entrop} are called optimal potentials. {These optimal potentials exist and satisfy some regularity conditions under integrability assumptions on $P$ and $Q$. 
	
	Following the framework in~\cite{Weed19}, we say that a probability $P$ is $\sigma^2$-subgaussian if $\E \big(e^{\frac{\|X\|^2}{2d\sigma^2}}\big)\leq 2$ when $X\sim P$.  When $P$ and $Q$ are subgaussian there exist optimal potentials}, denoted by $f^*,g^*$, satisfying the \emph{optimality conditions}, i.e. 
\begin{align}\nonumber
	&\int e^{{f^*(x)+g^*(y)- \frac{1}{2}\|x-y\|^2}}dQ(y)  =1, \ \ \text{for all $x\in \R^d $},\\
	\label{optimalli}
	&\int e^{{f^*(x)+g^*(y)- \frac{1}{2}\|x-y\|^2}}dP(x)  =1, \ \ \text{for all $y\in \R^d $},
\end{align}
see Proposition 6 in \cite{Weed19}. 
Moreover, the pair $(f^*, g^*)$ satisfying \eqref{optimalli} is unique up to constant shifts, and is uniquely specified by adopting the normalization convention
\begin{equation}\label{normalize}
	\int f^*(x) dP(x) = \int g^*(y) dQ(y).
\end{equation}
In what follows, we tacitly assume that \eqref{normalize} holds unless we explicitly specify an alternate convention.

The above considerations imply that the minimizer in the primal formulation is
$$d\pi^*=e^{{f^*(x)+g^*(y)- \frac{1}{2}\|x-y\|^2}}dQ(y)dP(x),$$ 
where $f^*$ and $g^*$ satisfy
\begin{align}\nonumber
	&f^*(x)=-\log\left(\int e^{{g^*(y)- \frac{1}{2}\|x-y\|^2}}dQ(y) \right),\\ \label{optimalli2} 
	&g^*(y)=-\log\left(\int e^{{f^*(y)- \frac{1}{2}\|x-y\|^2}}dP(y) \right).
\end{align}
Let $\alpha=(\alpha_1, \dots, \alpha_d)\in \N^{d}$ be a multi-index. If $P,Q\in \mathcal{P}(\R^d)$ are $\sigma^2$-subgaussian then (see
Proposition 1 in \cite{Weed19}), the optimal potential $f^*$ specified above is such that
\begin{equation}\label{bound1}
	|D^{\alpha}(f^*-{\textstyle\frac{1}{2}}\|\cdot \|^2)(x)|\leq    C_{k,d}   \begin{cases}
		1+\sigma^4 & \text{ if  } k=0\\
		\sigma^k(\sigma+\sigma^2)^k & \text{ otherwise},\\
	\end{cases}\qquad \text{if } \|x\|\leq \sqrt{d}\sigma,
\end{equation}
\begin{equation}\label{bound2}
	|D^{\alpha}(f^*-{\textstyle\frac{1}{2}}\|\cdot \|^2)(x)|\leq   C_{k,d}  \begin{cases}
		1+(1+\sigma^2)\| x\|^2 & \text{ if  } k=0\\
		\sigma^k(\sqrt{\sigma \|x\|}+\sigma \| x\|)^k & \text{ otherwise},\\
	\end{cases} \text{if } \|x\|\geq \sqrt{d}\sigma,
\end{equation}
and likewise for $g^*$, where in both cases $k:=|\alpha|$, and the constant $ C_{k,d} $ depends only on $d$ and $k$.

Throughout the paper, we will assume that $P,Q\in \mathcal{P}(\R^d)$ are $\sigma^2$-subgaussian probabilities and  $X_1, \dots, X_n$ and $Y_1, \dots, Y_m$ are independent samples of i.i.d r.v.'s with laws $P$ and $Q$, respectively. We will denote by $P_n$ and $Q_m$ the associated empirical measures.
We will require that the measures $P_n$ and $Q_n$ are also subgaussian, which is guaranteed by the following result, which summarizes Lemma 2 and Lemma 4 in \cite{Weed19}.
\begin{Lemma}\label{Lemma_weed}
	Let $X_1, \dots, X_n$ be i.i.d random variables with $\sigma^2$-subgaussian law $P\in \mathcal{P}(\R^d)$ and let $P_n$ be the associated empirical measure. Then, there exists a random variable $\tilde{\sigma}$, such that
	\begin{enumerate}[(i)]
		\item for every $n\in \N$, the probabilities $P$ and $P_n$ are uniformly $\tilde{\sigma^2}$-subgaussian almost surely,
		\item for any $k\in \N$, 
		we have $\E \left( \tilde{\sigma}^{2k}  \right)\leq  2k^k\sigma^{2k}$.
	\end{enumerate}
\end{Lemma}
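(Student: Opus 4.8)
The statement is a convenient repackaging of Lemmas~2 and~4 of~\cite{Weed19}, so the quickest proof simply invokes those two results; for completeness, here is the shape of the self-contained argument I would write.

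\emph{Construction.} Put $Z_i := e^{\|X_i\|^2/(2d\sigma^2)}$, so the $Z_i$ are i.i.d., $Z_i\ge 1$, and $\E Z_1\le 2$ by assumption. The object to control is the random sequence of empirical exponential moments $M_n := \frac1n\sum_{i=1}^n Z_i$. Two facts about it are used. First, $M_n\ge 1$ and $\E M_n=\E Z_1\le 2$. Second, and crucially, by exchangeability of $X_1,\dots,X_n$ the sequence $(M_n)_{n\ge 1}$ is a reverse martingale with respect to the decreasing $\sigma$-fields $\sigma(M_n,M_{n+1},\dots)$; equivalently, for each fixed $N$ the finite sequence $M_N,M_{N-1},\dots,M_1$ is a nonnegative (forward) martingale. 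Hence $M_\infty:=\sup_{n\ge 1}M_n<\infty$ a.s., and applying Doob's maximal inequality to the reversed finite martingale and letting $N\to\infty$ gives
\[
  \lambda\,\P(M_\infty>\lambda)\ \le\ \E M_1\ \le\ 2,\qquad \lambda>0.
\]
I would then set $\displaystyle \tilde\sigma^2:=\sigma^2\,\frac{\log(M_\infty\vee 2)}{\log 2}$, which satisfies $\tilde\sigma^2\ge\sigma^2$ since $M_\infty\vee 2\ge 2$.

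\emph{Proof of (i).} Both parts rest on the elementary concavity of $t\mapsto t^{\rho}$ on $[0,\infty)$ with exponent $\rho:=\sigma^2/\tilde\sigma^2\in(0,1]$. For a fresh $X\sim P$ independent of the sample, Jensen's inequality gives $\E\, e^{\|X\|^2/(2d\tilde\sigma^2)}=\E\big((e^{\|X\|^2/(2d\sigma^2)})^{\rho}\big)\le (\E\, e^{\|X\|^2/(2d\sigma^2)})^{\rho}\le 2^{\rho}\le 2$, i.e.\ $P$ is $\tilde\sigma^2$-subgaussian (this is just monotonicity of the subgaussian proxy). Applying the same concavity to the empirical average, for every $n$,
\[
  \frac1n\sum_{i=1}^n e^{\|X_i\|^2/(2d\tilde\sigma^2)}=\frac1n\sum_{i=1}^n Z_i^{\rho}\ \le\ \Big(\tfrac1n\sum_{i=1}^n Z_i\Big)^{\rho}=M_n^{\rho}\ \le\ (M_\infty\vee 2)^{\rho}=2,
\]
the last equality because $\rho=\log 2/\log(M_\infty\vee 2)$. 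Thus every $P_n$ is $\tilde\sigma^2$-subgaussian, uniformly in $n$, which is (i).

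\emph{Proof of (ii), and the main obstacle.} For the moment bound I would integrate the tail. Since $\log(M_\infty\vee 2)/\log 2\ge 1$ and, for $t>1$, $\{\log(M_\infty\vee 2)/\log 2>t\}=\{M_\infty>2^{t}\}$, the estimate above gives
\[
  \E\,\tilde\sigma^{2k}=\sigma^{2k}\,\E\Big(\tfrac{\log(M_\infty\vee 2)}{\log 2}\Big)^{\!k}\ \le\ \sigma^{2k}\Big(1+2\!\int_1^\infty k\,t^{k-1}\,2^{-t}\,dt\Big),
\]
and the last integral is a Gamma integral of order $k$, so the right-hand side is of order $k!\,\sigma^{2k}$; the point is that the logarithm turns the merely integrable (and in general not sub-exponential) tail of $M_\infty$ into a genuinely exponential one. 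The delicate step — the one I expect to require real care — is squeezing the constant down to exactly $2k^{k}$: the first-moment bound $\P(M_\infty>\lambda)\le 2/\lambda$ is a little too lossy for small $k$. One fix is to use Doob's $L^{p}$ maximal inequality, $\E M_\infty^{p}\le(\tfrac{p}{p-1})^{p}\E Z_1^{p}$, together with a bound on $\E Z_1^{p}=\E\, e^{p\|X_1\|^2/(2d\sigma^2)}$ and an optimization over $p$; another is to peel off the small-$n$ terms (most importantly $n=1$, which forces $\tilde\sigma^2\ge\|X_1\|^2/(2d\log 2)$) and estimate $\E\|X_1\|^{2k}\le 2\,k!\,(2d\sigma^2)^{k}$ directly via $e^{x}\ge x^{k}/k!$. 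This is exactly the bookkeeping performed in the proofs of Lemmas~2 and~4 of~\cite{Weed19}, to which I would defer for the sharp constant.
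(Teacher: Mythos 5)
Since the paper gives no proof of Lemma~\ref{Lemma_weed} and records it only as a restatement of Lemmas~2 and~4 of~\cite{Weed19}, your choice to invoke those two results is exactly the paper's own treatment, and that part of your answer is correct as it stands.

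Your self-contained sketch is worth a comment, though, because it sets up a \emph{different} $\tilde\sigma$ from the one in~\cite{Weed19}. Defining $\tilde\sigma^2=\sigma^2\log(M_\infty\vee 2)/\log 2$ with $M_\infty=\sup_n M_n$, $M_n=\tfrac1n\sum_i e^{\|X_i\|^2/(2d\sigma^2)}$, then combining the reverse-martingale structure of $(M_n)$, Doob's maximal inequality $\lambda\,\P(M_\infty>\lambda)\le \E Z_1\le 2$, and concavity of $t\mapsto t^{\rho}$ gives a clean and correct proof of part~(i). For part~(ii), however, the construction as you set it up does not reach the stated constant, and the repairs you propose do not close the gap. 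The weak-$L^1$ tail bound yields $\E\tilde\sigma^{2k}\le\sigma^{2k}\bigl(1+2k!/(\log 2)^k\bigr)$, which already at $k=1$ is about $3.9\,\sigma^{2}$, strictly larger than the claimed $2\sigma^{2}$. Your first fix, Doob's $L^{p}$ maximal inequality $\E M_\infty^{p}\le\bigl(\tfrac{p}{p-1}\bigr)^{p}\E Z_1^{p}$, is unavailable under the hypotheses: $\E Z_1^{p}=\E\exp\bigl(p\|X_1\|^2/(2d\sigma^2)\bigr)$ is not controlled, and can be $+\infty$, for every $p>1$ when $P$ is only assumed $\sigma^2$-subgaussian in the sense $\E Z_1\le 2$. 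Your second fix (peeling off small $n$ and using $\tilde\sigma^2\ge\|X_1\|^2/(2d\log 2)$ on $\{Z_1\ge 2\}$) produces a \emph{lower} bound on $\tilde\sigma$, so it does not help bound $\E\tilde\sigma^{2k}$ from above for the $\tilde\sigma$ you defined. Obtaining the sharp $2k^{k}\sigma^{2k}$ genuinely requires the particular choice of $\tilde\sigma$ made in~\cite{Weed19}, which is not the one in your sketch. You are explicit about deferring to the cited lemmas for that constant, so this does not invalidate your answer, but the sketch as written proves part~(i) completely and part~(ii) only with a larger, $k!/(\log 2)^k$-type constant.
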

\section{An improved central limit theorem for subgaussian probability measures}\label{sec:TCL}
This section shows that, for subgaussian probability measures, the expected empirical entropic transportation cost converges to its population counterpart with rate $o(n^{-1/2})$. This is an improvement over the bound \eqref{mean_deviation} derived in \cite{Weed19} and has, as a main consequence, a CLT for the empirical entropic transportation cost with the natural centering constants (see Theorem \ref{NaturalCLT} below), which, in turn, yields an asymptotically valid confidence interval for $S_\varepsilon(P,Q)$ regardless the dimension, $d$.

{Let $s$ be a nonnegative integer. To prove the main result in this section, we introduce the class $\mathcal{G}^s(C)$, consisting of all $f\in \mathcal{C}^{s}(\R^d)$ such that 
\begin{align}\nonumber
 &|f(x)|\leq C(1+\| x\|^{3}),\\
 \label{DefinitionGs}
    &|D^{\alpha}f(x)|\leq C(1+\| x\|^{s+1}), \ \ |\alpha|\leq s.
\end{align}}
Our next results gives an estimate of the complexity of this class, in terms of covering numbers with respect to the random metric $L_2(P_n)$. The proof can be easily adapted from the proof of Proposition 3 in \cite{Weed19}. We omit further details. 
\begin{Lemma} Assume $\mathcal{G}^s(C)$ is as above. If $X_1, \dots, X_n$ are i.i.d random variables with $\sigma^2$-subgaussian law $P\in \mathcal{P}(\R^d)$, $P_n$ is the associated empirical measure and $L=\frac 1 n\sum_{i=1}^n e^{-\|X_i\|^2/(4d\sigma^2)}$ then, for a constant $C_{s,d}$ depending only on $s$ and $d$,
\begin{equation}\label{DonskerG}
   \log \mathcal{N}(\epsilon,\mathcal{G}^s(C), L_2(P_n))\leq C_{s,d} L^{\frac{d}{2s}}\epsilon^{\frac{-d}{s}}(1+\sigma^{d})(1+\sigma^{s})^{\frac{d}{s}}.
\end{equation}
\end{Lemma}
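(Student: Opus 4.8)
The plan is to reduce the problem on $\R^d$ to a countable family of covering problems on bounded balls, to which the classical metric entropy estimates for H\"older classes apply, and then to reassemble these into an $L_2(P_n)$-cover by a weighted peeling argument tuned to the subgaussian tail. This is exactly the scheme used in the proof of Proposition~3 of \cite{Weed19}, and the only change here is to track the polynomial growth rates in \eqref{DefinitionGs} rather than the potential bounds \eqref{bound1}--\eqref{bound2}. Concretely, set $r_m := 2\sigma\sqrt{dm}$ for $m\in\N$, and let $A_m := \{x : r_{m-1}\le \|x\| < r_m\}$, so that $\{A_m\}_{m\ge 1}$ partitions $\R^d$ and $e^{-m}\le e^{-\|x\|^2/(4d\sigma^2)}\le e^{-(m-1)}$ on $A_m$. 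The key elementary observation is that the empirical mass of each shell is controlled by $L$: since $nL=\sum_i e^{-\|X_i\|^2/(4d\sigma^2)}\ge e^{-m}\,\#\{i:X_i\in A_m\}$, we have $P_n(A_m)\le e^{m}L$, and more usefully $\sum_m e^{-m}P_n(A_m)\le L$; at the same time, the $\sigma^2$-subgaussianity of $P$ together with Lemma~\ref{Lemma_weed} gives $P_n(A_m)\le 2e^{-c(m-1)}$ for a (random) $c>0$, so that $P_n$ puts exponentially little mass on the far shells.

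On a fixed ball $B_{r}=\{\|x\|\le r\}$ with $r\ge 1$, I would rescale by $x\mapsto rx$: if $f\in\mathcal{G}^s(C)$ and $h(u):=f(ru)$ then \eqref{DefinitionGs} gives $|D^\alpha h(u)|=r^{|\alpha|}|(D^\alpha f)(ru)|\le C r^{s}(1+r^{s+1})\le 2Cr^{2s+1}$ for all $|\alpha|\le s$ and $u\in B_1$, and likewise $\|h\|_{\infty,B_1}=\|f\|_{\infty,B_r}\le 2Cr^{3}$, so $h/(2Cr^{2s+1})$ lies in the unit ball of $\mathcal{C}^s(B_1)$. The Kolmogorov--Tikhomirov estimate $\log\mathcal{N}(\eta,\{h\in\mathcal{C}^s(B_1):\|h\|_{\mathcal{C}^s(B_1)}\le 1\},\|\cdot\|_\infty)\le C_{s,d}\eta^{-d/s}$, after undoing the scaling and restricting the net to $A_m\subset B_{r_m}$, yields
\[
\log\mathcal{N}\big(\delta_m,\ \mathcal{G}^s(C)|_{A_m},\ \|\cdot\|_{\infty,A_m}\big)\ \le\ C_{s,d}\,\big(C r_m^{2s+1}/\delta_m\big)^{d/s},
\]
valid for $\delta_m$ not exceeding the sup-norm diameter of $\mathcal{G}^s(C)|_{A_m}$, which is $\asymp C r_m^{3}$ (beyond that the covering number is $1$).

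To reassemble, choose for each $m$ an element of a sup-norm $\delta_m$-net of $\mathcal{G}^s(C)|_{A_m}$; gluing them gives a point of $\R^n$ within $\big(\sum_m P_n(A_m)\delta_m^2\big)^{1/2}$ of $(f(X_i))_{i\le n}$ in the $L_2(P_n)$ metric, so $\log\mathcal{N}(\epsilon,\mathcal{G}^s(C),L_2(P_n))\le\sum_m\log\mathcal{N}(\delta_m,\mathcal{G}^s(C)|_{A_m},\|\cdot\|_{\infty,A_m})$ whenever $\sum_m P_n(A_m)\delta_m^2\le\epsilon^2$. One then optimizes the scales $\delta_m$ under this budget constraint — a Lagrange computation, with $\delta_m$ growing in $r_m$ and shrinking in $P_n(A_m)$ — and bounds the resulting sum $\sum_m (C r_m^{2s+1}/\delta_m)^{d/s}$ using $P_n(A_m)\le e^m L$, $r_m\asymp\sigma\sqrt m$, a H\"older inequality across $m$ against $\sum_m e^{-m}P_n(A_m)\le L$, and the exponential decay of $P_n(A_m)$ for large $m$. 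Carried out carefully, exactly as in the proof of Proposition~3 of \cite{Weed19}, this collapses the sum to $C_{s,d}\,L^{d/(2s)}\epsilon^{-d/s}(1+\sigma^d)(1+\sigma^s)^{d/s}$, the factor $L^{d/(2s)}$ emerging from the weighted shell-mass accounting and the $\sigma$-powers from the radii $r_m$ and the subgaussian tail (any powers of $C$ being absorbed into the stated constant, up to the trivial rescaling $\mathcal{G}^s(C)=C\,\mathcal{G}^s(1)$).

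The step I expect to be the main obstacle is precisely this last balancing: the per-shell H\"older radius $r_m^{2s+1}$ grows while one is simultaneously trying to spend only an $\epsilon^2$ budget of $L_2(P_n)$-mass across all shells, so the allocation of the scales $\delta_m$ and the accompanying summation must be done with care — extracting the power $L^{d/(2s)}$ rather than a cruder constant, keeping the $\sigma$-dependence polynomial of the stated form, and avoiding spurious $\log(1/\epsilon)$ factors. Everything else — the rescaling estimate, the appeal to Kolmogorov--Tikhomirov, and the product bound for assembled covers — is routine, which is why the statement follows from a direct adaptation of the argument in \cite{Weed19}.
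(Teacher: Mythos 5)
The paper gives no proof of its own — it simply states that the argument ``can be easily adapted from the proof of Proposition 3 in \cite{Weed19}'' — and your proposal lays out precisely that adaptation (shell decomposition tuned to the subgaussian tail, rescaling to apply the Kolmogorov--Tikhomirov bound on each ball, reassembly under an $L_2(P_n)$ budget, and a H\"older-type balancing to extract the $L^{d/(2s)}$ factor), so you are taking the same route the paper refers to. The one thing to be aware of is that you explicitly leave the final scale-allocation/H\"older step unfinished and call it ``the main obstacle''; since this is exactly where the specific form $L^{d/(2s)}\epsilon^{-d/s}(1+\sigma^d)(1+\sigma^s)^{d/s}$ has to emerge (and a naive H\"older against $\sum_m e^{-m}P_n(A_m)\le L$ yields a divergent complementary sum, so the Weed--style accounting of nonempty shells must genuinely be carried out), but the paper defers this step to \cite{Weed19} in the same way, so your proposal is at the paper's own level of detail.
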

Finally, we introduce the space $\mathcal{G}^s=\bigcup_{C\geq 0}\mathcal{G}^s(C)$ 
endowed with the norm $$\|f \|_s=\Big\|\frac{f }{1+\|\cdot\|^3}\Big\|_{\infty}+ \sum_{i=1}^s\sum_{|k|=i}\Big\|\frac{D^k f }{1+\|\cdot\|^{s+1}}\Big\|_{\infty}$$
Let $\big({\mathcal{G}^{s}}\big)'$ denote the dual space of $\mathcal{G}^{s}$, endowed with the {dual} norm
$$\| G\|_{s}'=\sup_{f\in\mathcal{G}^s, \ \| f\|_{s}\leq 1}|G(f)|.$$ 
With these ingredients we are ready to prove the main technical result of this section, from which we obtain the CLT for the entropic transportation cost with natural centering constants (Theorem \ref{NaturalCLT} below).
\begin{Lemma}\label{Lemma:opequena}
If $P,Q\in \mathcal{P}(\R^d)$ be $\sigma^2$-subgaussian probabilities, then 
\begin{align}\label{one_sample}
    \sqrt{n}\left|\E S(P_n,Q)-S(P,Q)\right|\rightarrow 0.
\end{align}
Moreover, if $m=m(n)$ and $\lambda:=\lim_{n\rightarrow\infty}\frac{n}{n+m}\in (0,1)$, then 
\begin{align}
    \sqrt{\frac{nm}{n+m}}\left|\E S(P_n,Q_m)-S(P,Q)\right|\rightarrow 0.
\end{align}
\end{Lemma}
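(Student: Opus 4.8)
The plan is to compare $\E S(P_n, Q)$ with $S(P, Q)$ by exploiting the dual formulation \eqref{dual_entrop} together with the optimality conditions \eqref{optimalli}. Writing $F(f,g;P,Q)$ for the dual objective in \eqref{dual_entrop}, the key observation is that $S(P,Q) = F(f^*, g^*; P, Q)$ and, by optimality, the gradient of $F$ in $(f,g)$ vanishes at $(f^*, g^*)$; hence $S(P_n, Q) \geq F(f^*, g^*; P_n, Q)$ and, conversely, $S(P, Q) \geq F(f_n, g_n; P, Q)$ where $(f_n, g_n)$ are the optimal potentials for $(P_n, Q)$. Taking expectations in the first inequality and using that $\E[F(f^*, g^*; P_n, Q)] = F(f^*, g^*; P, Q) = S(P, Q)$ (since $P_n$ enters only through the linear term $\int f^*\,dP_n$ and the term $\int e^{f^*(x)+g^*(y)-\frac12\|x-y\|^2}\,dP_n(x)\,dQ(y)$, both of which are unbiased) gives $\E S(P_n, Q) \geq S(P, Q)$. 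For the matching upper bound, I would expand
\[
S(P,Q) - \E S(P_n, Q) \leq \E\big[F(f_n, g_n; P, Q) - F(f_n, g_n; P_n, Q)\big],
\]
and the right-hand side is $\E\big[(P - P_n)(h_n)\big]$ for an explicit function $h_n$ built from $f_n$ and the partial integral of the exponential against $g_n$; by the optimality condition for $(f_n,g_n)$, this $h_n$ simplifies considerably (the exponential term integrates to a constant), leaving essentially $\E[(P-P_n)(f_n)]$ up to lower-order pieces.

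The heart of the argument is then to show $\sqrt{n}\,\E[(P_n - P)(f_n)] \to 0$. Here I would use the empirical-process structure: $(P_n - P)(f_n) = (P_n - P)(f^*) + (P_n - P)(f_n - f^*)$. The first term has mean zero exactly, so it contributes nothing to $\E[(P_n-P)(f_n)]$. For the second term, the strategy is to show that $f_n - f^*$ lives (with high probability, after centering and rescaling) in a shrinking multiple of a fixed Donsker-type class. Concretely, the derivative bounds \eqref{bound1}--\eqref{bound2} applied to both $f^*$ and $f_n$ (the latter valid because $P_n$ is $\tilde\sigma^2$-subgaussian by Lemma \ref{Lemma_weed}) place $f_n - \frac12\|\cdot\|^2$ and $f^* - \frac12\|\cdot\|^2$ in $\mathcal{G}^s(C)$ for a random $C$ with good moments; the covering number bound \eqref{DonskerG} then shows $\mathcal{G}^s(C)$ is $P$-Donsker (for $s = [d/2]+1$, the entropy integral converges). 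Combined with a quantitative stability estimate showing $\|f_n - f^*\|_s \to 0$ in probability with controlled moments — which is exactly the kind of strengthened dual-convergence result the introduction promises from Section \ref{section3}, but in the non-quantitative subgaussian form it suffices that $\|f_n - f^*\|_s \to 0$ a.s. with a uniform integrability bound — an equicontinuity argument for the empirical process indexed by $\mathcal{G}^s$ yields $\sqrt{n}(P_n - P)(f_n - f^*) \to 0$ in probability, and the moment control upgrades this to convergence of expectations.

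I expect the main obstacle to be precisely this last step: establishing that $f_n \to f^*$ in the strong norm $\|\cdot\|_s$ with enough uniform integrability to pass from convergence in probability of $\sqrt{n}(P_n-P)(f_n - f^*)$ to convergence of its expectation. This requires (a) a stability/continuity result for the map $(P,Q) \mapsto f^*$ with respect to weak convergence of the inputs, strong enough to control all derivatives up to order $s$ simultaneously — presumably proved via the fixed-point equations \eqref{optimalli2} and a compactness argument exploiting the uniform bounds \eqref{bound1}--\eqref{bound2} — and (b) control of the tails, so that the contribution to $\E[(P_n-P)(f_n-f^*)]$ from the region $\|x\|$ large (where the bounds \eqref{bound2} degrade polynomially) is negligible; the subgaussianity of $P$ and the moment bound $\E \tilde\sigma^{2k} \leq 2k^k\sigma^{2k}$ from Lemma \ref{Lemma_weed} are the tools for this truncation. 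The two-sample statement follows by the same scheme applied to both coordinates: decompose $\E S(P_n, Q_m) - S(P,Q)$ into a $P$-part and a $Q$-part, bound each by the one-sample argument at scales $\sqrt{n}$ and $\sqrt{m}$ respectively, and note that $\sqrt{\tfrac{nm}{n+m}} \asymp \sqrt{n} \asymp \sqrt{m}$ under $\lambda \in (0,1)$.
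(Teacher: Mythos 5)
Your proposal follows essentially the same route as the paper: use the dual formulation and the optimality conditions to reduce the bias to $\E[(P_n-P)(f_n-f^*)]$, control the empirical process on the class $\mathcal{G}^s$ via the covering-number bound \eqref{DonskerG}, and combine this with almost-sure convergence of $\|f_n-f^*\|_s$ plus uniform integrability (the paper phrases the final step as a Cauchy--Schwarz split into $\E\|\mathbb{G}_n\|_s'^2$ and $\E\|f_n-f^*\|_s^2$, while you phrase it as asymptotic equicontinuity followed by moment control, but these are the same argument in different clothing). You have also correctly located the genuine technical crux --- proving $\|f_n-f^*\|_s\to 0$ a.s. with a dominating moment bound --- which is exactly what the paper supplies in Lemma~\ref{lemma:boundinGs} and the subsequent domination via Lemma~\ref{Lemma_weed}.
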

\begin{proof}
Let $(f_n,g_n)\in L_1(P_n)\times L_1(Q)$ be the unique pair of optimal potentials satisfying \eqref{optimalli} and \eqref{normalize} for $P_n,Q$. As noted above, by Proposition 6 in \cite{Weed19}, this pair satisfies \eqref{bound1} and  \eqref{bound2}. We observe that, by optimality of the potentials, 
\begin{equation*}
   S(P,Q)\geq  \int_{\R^d} f_n(x) d P(x)+\int_{\R^d} g_n(y) d Q(y)-\int_{\R^d\times \R^d}e^{{f_n(x)+g_n(y)- \frac{1}{2}\|x-y\|^2}} d P(x)dQ(y)+1,
\end{equation*}
which yields
\begin{align*}
  0\leq \sqrt{n}\left(\E S(P_n,Q)-S(P,Q)\right)\leq &    \E \int_{\R^d} f_n(x) \sqrt{n}(d P_n-dP)(x)\\ &-\E \int_{\R^d\times \R^d}e^{{f_n(x)+g_n(y)- \frac{1}{2}\|x-y\|^2}}  \sqrt{n}(d P_n-dP)(x)dQ(y).
\end{align*}
Now the optimality condition
\begin{equation*}
    \int_{\R^d\times \R^d}e^{{f_n(x)+g_n(y)- \frac{1}{2}\|x-y\|^2}}dQ(y) =1, \ \ \text{for all } x\in \R^d
\end{equation*}
 implies that 
\begin{equation*}
   0\leq \sqrt{n}\left(\E S_{1}(P_n,Q)-S_{1}(P,Q)\right)\leq     \E \int_{\R^d} f_n(x) \sqrt{n}(d P_n-dP)(x).
\end{equation*}
Set $s=[d/2]+1$ and let $(f^*,g^*)\in L_1(P)\times L_1(Q)$ be the unique pair of optimal potentials satisfying \eqref{optimalli} and \eqref{normalize} for $P,Q$.
Since $\E \int_{\R^d} f^*(x)\sqrt{n}(d P_n-dP)(x)=0$,
\begin{align*}
  0\leq \sqrt{n}\left(\E S_{1}(P_n,Q)-S_{1}(P,Q)\right)\leq &    \E \int_{\R^d} \{f_n(x)-f^*(x)\}\sqrt{n}(d P_n-dP)(x)
\end{align*}
We write now $\mathbb{G}_n$ for the empirical process indexed by $\mathcal{G}^s$, that is, $\mathbb{G}_n(f)=\sqrt{n}(P_n(f)-P(f))$, $f\in \mathcal{G}^s$, and note that
\begin{align*}
    |\mathbb{G}_n(f)|&\leq \sqrt{n}(P_n+P)(|f|)
    \leq \Big\|\frac{f }{1+\|\cdot\|^3}\Big\|_{\infty}\sqrt{n}(P_n+P)(1+\|\cdot\|^3)\\
    &\leq \sqrt{n}(P_n+P)(1+\|\cdot\|^3) \| f\|_s
    \leq \sqrt{n}(2+48(d\tilde{\sigma}^2)^3)\| f\|_s,
\end{align*}
where the last inequality comes from Lemma~1 in \cite{Weed19}. Consequently, we deduce that $\mathbb{G}_n$ belongs to the dual space $\big({\mathcal{G}^s}\big)'$, for all $n\in \N$, and we get the bound
 \begin{align*}
   \sqrt{n}\left(\E S_{1}(P_n,Q)-S_{1}(P,Q)\right)\leq &    \E \left\lbrace  \| \mathbb{G}_n\|'_{s}\|f^*-f_n\|_{s}\right\rbrace.
\end{align*}
Using Cauchy–Schwarz's inequality we see that
 \begin{align}\label{eq:terms}
   \sqrt{n}\left(\E S_{1}(P_n,Q)-S_{1}(P,Q)\right)\leq &   \sqrt{ \E {\| \mathbb{G}_n\|'_{s}}^2 \E \| f^*-f_n\|^2_{s}}.
\end{align}
Note that $ \| \mathbb{G}_n\|'_{s}$ is the sup taken on the unit ball of $\mathcal{G}^s$, which is {contained in}  $\mathcal{G}^s(1)$. We can conclude, by using \eqref{DonskerG} and  Theorem 3.5.1 and Exercise 2.3.1 in \cite{Gin2015MathematicalFO}, that there exists a constant $C_{s, d}>0$ such that
 \begin{align*}
     \E {\| \mathbb{G}_n\|'_{s}}^2&\leq C_{s,d}\E \left(\int_{0}^{\max_{\| f\|_{s}\leq 1}  \| f\|_{L_2(P_n)}} \sqrt{L^{\frac{d}{2s}}\epsilon^{\frac{-d}{s}}(1+\sigma^{d})(1+\sigma^{s})^{\frac{d}{s}}}d\epsilon\right)^2\\
     &\leq (1+\sigma^{d})(1+\sigma^{s})^{\frac{d}{s}} C_{s, d}\E \left(\int_{0}^{  1+ 4\sqrt{3}d^{3/2}\tilde{\sigma}^3}{L^{\frac{d}{4s}}\epsilon^{\frac{-d}{2s}}}d\epsilon\right)^2\\
     &\leq C'_{s, d}(1+\sigma^{2d}) \E  L^{\frac{d}{2s}}{( 1+ \tilde{\sigma}^3)}^{\frac{2s-d}{s}},
 \end{align*}
 where we have used first Lemma~1 in \cite{Weed19} to bound 
 $$ \max_{\| f\|_{s}\leq 1}  \| f\|_{L_2(P_n)}\leq 1+\left(\int \|x\|^6 dP_n(x)\right)^{1/2} \leq 1+ 4\sqrt{3}d^{3/2}\tilde{\sigma}^3 $$
 and then the fact that $s=[d/2]+1 $. Using the Cauchy-Schwarz inequality we see that
 \begin{align*}
     \E  L^{\frac{d}{2s}}{( 1+ \tilde{\sigma}^3)}^{\frac{2s-d}{s}}\leq \sqrt{\E  L^{\frac{d}{s}}\E {( 1+ \tilde{\sigma}^3)}^{\frac{2(2s-d)}{s}}}
 \end{align*}
 where we can use the fact that $\E L < C$ for a positive constant $C$ independent of $n$ and Lemma~\ref{Lemma_weed} to conclude that
 $\lim\sup \E {\| \mathbb{G}_n\|'_{s}}^2<\infty$. 
\\ \\
To deal with the second term in \eqref{eq:terms} we denote $\Delta_n=f^*-f_n$. We prove next that $\|\Delta_n\|_s\rightarrow 0$ almost surely, and then that it is dominated by a random variable with finite second moment. Together, these facts imply that  $\E \| f^*-f_n\|^2_{s}\rightarrow 0$  and conclude the proof. The first claim is given by the following result.
\begin{Lemma}\label{lemma:boundinGs}
Let $P,Q\in \mathcal{P}(\R^d)$ be $\sigma$-subgaussian probabilities, and $P_n, Q_n$ associated empirical measures. Then, the optimal transport potentials $(f_n,g_n)$ for $P_n,Q_n$ satisfy
 $\|f_n-f^*\|_s\rightarrow 0$ and $\|g_n-g^*\|_s\rightarrow 0$ almost surely.
\end{Lemma}

\begin{proof}
We prove the result for $f_n$, with the same conclusion following for $g_n$ by symmetry. First, we use induction to prove convergence of the derivatives up to order $s$. We follow classical arguments in real analysis, see \cite{Rudin1987RealAC}:\\ \\
 \textbf{(1)} \emph{ For $ J=0$}, Proposition 4 in \cite{Weed19} shows that, almost surely, $\Delta_n:=f^*-f_n\rightarrow 0$ uniformly in compact sets.\\ \\
 \textbf{(2)}\emph{ Assume that for every  $k$ with $ |k|\leq J-1$, we have $D^k\Delta_n\rightarrow 0$, uniformly in compact sets.} Let $k=(k_1,\dots, k_{d})$ be such that $|k|= J$ and let $B_R\subset \R^d$ be the ball of radius $R$ centered at $0$. Using the fact that all the derivatives of $ D^k\Delta_n $ are bounded and $ D^k\Delta_n $ is itself pointwise bounded, see Proposition 1 and Lemma 2 in \cite{Weed19}, we derive that the sequence $ D^k\Delta_n $ is equicontinuous and  bounded for all points. We can then apply  the Arzel\`a-Ascoli theorem on $B_R$ to deduce that, up to subsequences, $ D^k\Delta_n \rightarrow \Delta^k$ uniformly on $B_R$. Suppose, without losing generality, that $k_1\geq 1$, set $k'=(k_1-1,\dots, k_d)$ and note that
 $$ D^{k'}\Delta_n(x)=\int_{0}^{x_1} D^k\Delta_n(t, ,x_2,\dots,x_d)dt + D^{k'}\Delta_n(0,x_2,\dots,x_d), $$
which implies that 
\begin{align*}
   & |D^{k'}\Delta_n(x)-\int_{0}^{x_1} \Delta^k(t, ,x_2,\dots,x_d)dt |\\
    & \leq \int_{0}^{x_1} |D^k\Delta_n(t, ,x_2,\dots,x_d)-\Delta^k(t, ,x_2,\dots,x_d)|dt+ |D^{k'}\Delta_n(0,x_2,\dots,x_d)|. 
\end{align*}
As a consequence,
\begin{align*}
     &\sup_{x\in B_R}|D^{k'}\Delta_n(x)-\int_{0}^{x_1} \Delta^k(t, ,x_2,\dots,x_d)dt |\\
     &\leq R \sup_{x\in B_R}|D^k\Delta_n(x)- \Delta^k(x)|+ |D^{k'}\Delta_n(0,x_2,\dots,x_d)|\rightarrow 0,
\end{align*}
where the limit follows from the induction hypothesis (recall that $\sup_{x\in B_R}|D^{k'}\Delta_n(x)|$ $\rightarrow 0$). By uniqueness of the limit we conclude that $0=\int_{0}^{x} \Delta^kdx_1$, which implies that $ \Delta^k=0$. By taking $R\rightarrow\infty$ we conclude that $D^k\Delta_n\rightarrow 0$ uniformly on the compact sets of $\R^d$. 

To show convergence in the norm $\|\cdot\|_s$, it suffices to show that for any $\epsilon > 0$, there exists an $n_0$ such that $\|\Delta_n\|_s \leq \epsilon$ for all $n \geq n_0$.
Recall that by Lemma~\ref{Lemma_weed} (i) and Proposition~1 in \cite{Weed19}, there exists an almost surely finite random variable $\tilde \sigma$ and a constant $K_{s, d}$ such that for all $n \in \N$ and $x \in \R^d$,
\begin{equation}\label{norm_bound}
\begin{split}
	\frac{|\Delta_n(x)|}{1 + \|x\|^3} & \leq K_{s, d} \frac{1 + \tilde \sigma^4}{1 + \|x\|} \\
	\frac{|D^k \Delta_n(x)|}{1 + \|x\|^{s+1}} & \leq K_{s, d} \frac{1 + \tilde \sigma^{3s}}{1 + \|x\|} \quad \quad \forall |k| \leq s.
\end{split}
\end{equation}
We obtain that there exists a finite random variable $\tilde K$ such that
\begin{equation*}
	\frac{|\Delta_n(x)|}{1 + \|x\|^3} + \sum_{i=1}^s \sum_{|k|=i} \frac{|D^k \Delta_n(x)|}{1 + \|x\|^{s+1}} \leq \epsilon/2 \quad \quad \forall \|x\| > \tilde K \epsilon^{-1}, n \geq 0\,.
\end{equation*}
Since $\Delta_n$ and $D^k \Delta_n$ converge uniformly to zero on the compact set $\{x \in \R^d: \|x\| \leq c_{d, s} \tilde K \epsilon^{-1}\}$, there exists an $n_0$ for which
\begin{equation*}
	\frac{|\Delta_n(x)|}{1 + \|x\|^3} + \sum_{i=1}^s \sum_{|k|=i} \frac{|D^k \Delta_n(x)|}{1 + \|x\|^{s+1}} \leq \epsilon/2 \quad \quad \forall \|x\| \leq \tilde K \epsilon^{-1}, n \geq n_0\,.
\end{equation*}
Combining these claims, we obtain that $\|\Delta_n\|_s \leq \epsilon$ for all $n \geq n_0$, as desired.
\end{proof}
To complete the proof of Lemma \ref{Lemma:opequena} it only remains to prove that $\|f_n-f^*\|_s$ can be dominated by a random variable with finite second moment.
We have from \eqref{norm_bound} above that $\|\Delta_n\|_s^2 \leq K'_{s, d} (1+\tilde \sigma^{3s})^2$ for some constant $K_{s, d}'$.
It only remains to show that $\E \left( 1+\tilde{\sigma}^{3s}\right)^2<\infty$. But Lemma~\ref{Lemma_weed} implies that all moments of $\tilde \sigma$ are finite, which completes the proof.
%

To deal with the two-sample case, we split the difference as follows
\begin{align*}
    &\sqrt{\frac{nm}{n+m}}|\E S_{1}(P_n,Q_m)-S_{1}(P,Q)|\\
    &\leq \sqrt{\frac{nm}{n+m}}|\E S_{1}(P_n,Q_m)-S_{1}(P,Q_m)| +\sqrt{\frac{nm}{n+m}}|\E S_{1}(P,Q_m)-S_{1}(P,Q)|.
\end{align*}
The second term tends to $0$ by using \eqref{one_sample}. For the first one we denote $g_{n,m}$ a potential of $S_{1}(P_n,Q_m)$ and $g_{m}$ a potential of $S_{1}(P,Q_m)$. Applying \eqref{eq:terms} we derive
\begin{align*}
    \sqrt{m}|\left(\E S(P_n,Q_m)-S(P,Q_m)\right)|\leq &   \sqrt{ \E {\| F_m\|'_{s}}^2 \E \| g_{n,m}-g_{m}\|^2_{s}}\\
    &\leq   2\sqrt{ \E {\|F_m\|'_{s}}^2 \left(\E \| g_{n,m}-g^*\|^2_{s}+\E \|g_{m}-g^*\|^2\right)}.
\end{align*}
We conclude using Lemma~\ref{lemma:boundinGs} (which can be trivially adapted to this setup) and the subsequent argument. 
\end{proof}
As a consequence of Lemma \ref{Lemma:opequena} by simply considering the change of variables  $x\mapsto x \epsilon^{-\frac{1}{2}}$ (recall the comments at the beginning of this section) we obtain the generalization to any $\epsilon>0$.
\begin{Corollary}\label{Corollary2.5}
Let $P,Q\in \mathcal{P}(\R^d)$ be $\sigma$-subgaussian probabilities and $P_n$, $Q_m$ associated empirical measures. Then 
\begin{align*}
    \sqrt{n}\left|\E S_{\epsilon}(P_n,Q)-S_{\epsilon}(P,Q)\right|\rightarrow 0
\end{align*}
and 
\begin{align*}
    \sqrt{\frac{nm}{n+m}}\left|\E S_{\epsilon}(P_n,Q_m)-S_{\epsilon}(P,Q)\right|\rightarrow 0.
\end{align*}
\end{Corollary}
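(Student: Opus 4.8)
The plan is to deduce Corollary~\ref{Corollary2.5} directly from Lemma~\ref{Lemma:opequena} via the scaling identity recorded at the start of Section~\ref{sec:prelim}, namely $S_\epsilon(P,Q) = \epsilon\, S_1(P^\epsilon, Q^\epsilon)$, where $P^\epsilon$ and $Q^\epsilon$ are the pushforwards of $P$ and $Q$ under the dilation $x \mapsto \epsilon^{-1/2} x$. The only thing that needs checking is that this scaling is compatible with taking empirical measures and expectations, and that the subgaussianity hypothesis is preserved under the dilation so that Lemma~\ref{Lemma:opequena} applies to the rescaled measures.

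First I would observe that if $X_1, \dots, X_n$ are i.i.d.\ with law $P$, then $\epsilon^{-1/2} X_1, \dots, \epsilon^{-1/2} X_n$ are i.i.d.\ with law $P^\epsilon$, and the empirical measure of the latter sample is exactly $(P_n)^\epsilon$, the pushforward of $P_n$ under $x \mapsto \epsilon^{-1/2}x$. Hence $S_\epsilon(P_n, Q) = \epsilon\, S_1\big((P_n)^\epsilon, Q^\epsilon\big)$ as an identity of random variables, and taking expectations gives $\E S_\epsilon(P_n, Q) = \epsilon\, \E S_1\big((P_n)^\epsilon, Q^\epsilon\big)$, where on the right the empirical measure $(P_n)^\epsilon$ is built from an i.i.d.\ sample of size $n$ from $P^\epsilon$. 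Therefore
\begin{equation*}
	\sqrt{n}\,\bigl|\E S_\epsilon(P_n, Q) - S_\epsilon(P, Q)\bigr| = \epsilon\,\sqrt{n}\,\bigl|\E S_1\big((P_n)^\epsilon, Q^\epsilon\big) - S_1\big(P^\epsilon, Q^\epsilon\big)\bigr|,
\end{equation*}
and the same reduction applies verbatim to the two-sample quantity with $\sqrt{nm/(n+m)}$ in place of $\sqrt n$.

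Second, I would check that $P^\epsilon$ and $Q^\epsilon$ remain subgaussian, so that Lemma~\ref{Lemma:opequena} is applicable to the pair $(P^\epsilon, Q^\epsilon)$. If $P$ is $\sigma^2$-subgaussian, i.e.\ $\E e^{\|X\|^2/(2d\sigma^2)} \leq 2$ for $X \sim P$, then for $X^\epsilon = \epsilon^{-1/2} X \sim P^\epsilon$ we have $\E e^{\|X^\epsilon\|^2/(2d(\sigma^2/\epsilon))} = \E e^{\|X\|^2/(2d\sigma^2)} \leq 2$, so $P^\epsilon$ is $(\sigma^2/\epsilon)$-subgaussian; likewise $Q^\epsilon$. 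Applying Lemma~\ref{Lemma:opequena} to $P^\epsilon, Q^\epsilon$ gives that the right-hand side of the displayed identity tends to $0$, which is precisely the claim for the one-sample case; the two-sample case follows identically from the second assertion of Lemma~\ref{Lemma:opequena}.

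There is no serious obstacle here: the corollary is a routine change-of-variables consequence of the lemma, and the entire content is bookkeeping — verifying that pushforward commutes with the formation of empirical measures and that subgaussianity transfers under dilation. The one point deserving a line of care is to note that the constant factor $\epsilon$ and the fixed dimension-dependent subgaussianity constant $\sigma^2/\epsilon$ do not interfere with the limit, since the convergence in Lemma~\ref{Lemma:opequena} is qualitative (to zero) and holds for every subgaussian pair regardless of the value of the subgaussianity parameter.
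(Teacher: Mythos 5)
Your proposal is correct and takes exactly the same route as the paper, which deduces Corollary~\ref{Corollary2.5} from Lemma~\ref{Lemma:opequena} via the change of variables $x \mapsto \epsilon^{-1/2}x$ and the scaling identity $S_\epsilon(P,Q) = \epsilon\, S_1(P^\epsilon,Q^\epsilon)$ noted at the start of Section~\ref{sec:prelim}. The paper leaves the verification implicit; you have simply spelled out the two bookkeeping points (pushforward commutes with forming empirical measures, and subgaussianity is preserved under dilation with a rescaled parameter) that justify the reduction.
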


As announced, Corollary \ref{Corollary2.5} improves over Corollary 1 in \cite{Weed19}, which implied $|\E S_{\epsilon}(P_n,Q)-S_{\epsilon}(P,Q)|=O(n^{-1/2})$ rather than $\left|\E S_{\epsilon}(P_n,Q)-S_{\epsilon}(P,Q)\right|=o(n^{-1/2})$.
\begin{Remark}\label{fast_rate}
	In some cases we can go much further in this direction. In fact, if $P$ and $Q$ are compactly supported then (see Theorem \ref{Theorem_Potential}  below)
	$$\E \| f_n-f^*\|_{\mathcal{C}^s(\Omega)}^2\leq \frac{c_\Omega}{n}$$
	for some constant $c_\Omega$. Plugging this into \eqref{eq:terms} and using again the fact that $\limsup_n \E {\| \mathbb{G}_n\|'_{s}}^2<\infty$
	we conclude that 
	\begin{equation}\label{improvedRate}
		\big|\E  S_{\epsilon}(P_n,Q)-S_{\epsilon}(P,Q)\big|\leq {\textstyle \frac{C_\Omega} n}
	\end{equation}
	for some constant $C_\Omega>0$. A similar conclusion holds for the two-sample problem. Whether this improved rate remains valid for general subgaussian
	probabilities is an open question.
\end{Remark}

The following central limit becomes a direct consequence of Theorem~3 in \cite{Weed19}, which shows that the fluctuations around the mean are asymptotically Gaussian, i.e.
\begin{equation}\label{CLTold}
    \sqrt{n}(S_{\epsilon}(P_n,Q)-\E S_{\epsilon}(P_n,Q))\xrightarrow{w} N(0,\operatorname{Var}_P(f^*_\epsilon)).
\end{equation}
Here $(f^*_{\epsilon},g^*_{\epsilon})$ are optimal potentials for  $S_\epsilon(P,Q)$. We observe that, while the pair of optimal potentials is not uniquely defined, it follows from the uniqueness of the minimizer in \eqref{kanto_entrop} that if $(\tilde{f}_\epsilon,\tilde{g}_\epsilon)$ is another pair of optimal potentials then $\tilde{f}_\epsilon=f^*_{\epsilon}+K$ $P$-a.s. for some constant $K$. Hence, $\operatorname{Var}_P(f^*_\epsilon)$ is well defined in the sense that it does not depend on the choice of optimal potential.

\begin{Theorem}\label{NaturalCLT}
Let $P,Q\in \mathcal{P}(\R^d)$ be $\sigma$-subgaussian probabilities, then 
\begin{equation*}
    \sqrt{n}(S_{\epsilon}(P_n,Q)-S_{\epsilon}(P,Q))\xrightarrow{w} N(0,\operatorname{Var}_P(f^*_{\epsilon})),
\end{equation*}
where $(f^*_{\epsilon},g^*_{\epsilon})$ are optimal potentials for $S_\epsilon(P,Q)$. Moreover, if $\lambda:=\lim_{n,m\rightarrow\infty}\frac{n}{n+m}\in (0,1)$, 
\begin{equation*}
    \sqrt{\frac{nm}{n+m}}(S_{\epsilon}(P_n,Q_m)-S_{\epsilon}(P,Q))\xrightarrow{w} N(0,(1-\lambda)\operatorname{Var}_P(f^*_{\epsilon})+\lambda \operatorname{Var}_Q(g^*_{\epsilon})).
\end{equation*}
\end{Theorem}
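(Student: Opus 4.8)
The proof is essentially the combination of the two ingredients already assembled. For the one-sample statement the plan is to write
\begin{equation*}
	\sqrt{n}\bigl(S_{\epsilon}(P_n,Q)-S_{\epsilon}(P,Q)\bigr) = \sqrt{n}\bigl(S_{\epsilon}(P_n,Q)-\E S_{\epsilon}(P_n,Q)\bigr) + \sqrt{n}\bigl(\E S_{\epsilon}(P_n,Q)-S_{\epsilon}(P,Q)\bigr)
\end{equation*}
and to treat the two terms separately. The first term converges weakly to $N(0,\operatorname{Var}_P(f^*_\epsilon))$ by \eqref{CLTold}, i.e. Theorem~3 in \cite{Weed19}. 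The second term is deterministic and tends to $0$ by Corollary~\ref{Corollary2.5}. Slutsky's theorem then yields the claim. As noted in the text preceding the statement, $\operatorname{Var}_P(f^*_\epsilon)$ does not depend on the choice of optimal potential, so the limiting law is well defined. The reduction to $\epsilon=1$ is, as before, the rescaling $x\mapsto\epsilon^{-1/2}x$ together with $S_\epsilon(P,Q)=\epsilon S_1(P^\epsilon,Q^\epsilon)$.

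For the two-sample statement I would argue in the same way, decomposing
\begin{equation*}
	\sqrt{\tfrac{nm}{n+m}}\bigl(S_{\epsilon}(P_n,Q_m)-S_{\epsilon}(P,Q)\bigr) = \sqrt{\tfrac{nm}{n+m}}\bigl(S_{\epsilon}(P_n,Q_m)-\E S_{\epsilon}(P_n,Q_m)\bigr) + \sqrt{\tfrac{nm}{n+m}}\bigl(\E S_{\epsilon}(P_n,Q_m)-S_{\epsilon}(P,Q)\bigr).
\end{equation*}
The second term vanishes by Corollary~\ref{Corollary2.5}. For the first term one invokes the two-sample central limit theorem for the fluctuations around the mean: since the samples $X_1,\dots,X_n$ and $Y_1,\dots,Y_m$ are independent, the contributions of the two empirical measures are asymptotically independent, and linearizing $S_\epsilon$ around $(f^*_\epsilon,g^*_\epsilon)$ exactly as in \cite{Weed19} gives convergence to $N\bigl(0,(1-\lambda)\operatorname{Var}_P(f^*_\epsilon)+\lambda\operatorname{Var}_Q(g^*_\epsilon)\bigr)$, the weights coming from $\sqrt{nm/(n+m)}=\sqrt{m/(n+m)}\,\sqrt{n}=\sqrt{n/(n+m)}\,\sqrt{m}$ applied to the two independent empirical-process terms. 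Slutsky's theorem finishes the argument.

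The only genuine content here is the improved bias bound of Corollary~\ref{Corollary2.5} (equivalently Lemma~\ref{Lemma:opequena}); once the $o(n^{-1/2})$ rate is in hand, the CLT with population centering is an immediate corollary of the known CLT with empirical centering, and there is no remaining obstacle. The one mild technical point, should one wish to make the two-sample fluctuation result self-contained rather than cite the one-sample argument of \cite{Weed19}, is to verify the joint asymptotic normality and asymptotic independence of the two empirical-process terms appearing in the linearization; this follows from the independence of the two samples together with the functional central limit theorem over the Donsker class controlled by \eqref{DonskerG}.
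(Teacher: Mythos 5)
Your proposal is correct and follows exactly the paper's argument: decompose into the Mena–Niles-Weed fluctuation CLT (Theorem 3 of \cite{Weed19}, i.e.\ \eqref{CLTold}) plus the bias term, kill the bias by Corollary~\ref{Corollary2.5}, and apply Slutsky. You correctly identify that the entire content of Theorem~\ref{NaturalCLT} lies in the $o(n^{-1/2})$ bias bound, which is precisely how the paper presents it (``a direct consequence of Theorem~3 in \cite{Weed19}'').
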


One important advantage of Theorem \ref{NaturalCLT} over \eqref{CLTold} is that it can be exploited for inferential purposes. For instance, it enables to build confidence intervals for $S_\epsilon(P,Q)$ as follows.\\ Note that we can estimate the asymptotic variance in the one-sample CLT by
\begin{equation}\label{estimated_variance}
\hat{\sigma}_n^2:=\operatorname{Var}_{P_n}(f_{n,\epsilon})={ \frac 1 n}\sum_{i=1}^n f_{n,\epsilon}^2(X_i)-\Big({ \frac 1 n}\sum_{i=1}^n f_{n,\epsilon}(X_i)\Big)^2,
\end{equation}
where $(f_{n,\epsilon},g_{n,\epsilon})$ is a pair of optimal potentials for $S_\epsilon(P_n,Q)$. It follows from the proof of Lemma \ref{Lemma:opequena} that $\E \|f_{n,\epsilon}-f^*_{\epsilon}\|_s^2\to 0$. Hence, $\|f_{n,\epsilon}-f^*_{\epsilon}\|_s\to 0$ in probability. Using the elementary bound $|a^2-b^2\leq |a-b|^2+2|b||a-b|$ we see that that
$$\Big|\frac 1 n \sum_{i=1}^n f_{n,\epsilon}^2(X_i)-\frac 1 n \sum_{i=1}^n {f^*_{\epsilon}}^2(X_i)\Big|\leq (\|f_{n,\epsilon}-f^*_{\epsilon}\|_s^2+2 
\|f^*_{\epsilon}\|_s\|f_{n,\epsilon}-f^*_{\epsilon}\|_s){\textstyle \frac 1 n} \sum_{i=1}^n (1+\|X_i\|^3).$$
Since $\frac 1 n \sum_{i=1}^n {f^*_{\epsilon}}^2(X_i)\to \E _P({f^*_{\epsilon}}^2)$ a.s. we conclude that $\frac 1 n \sum_{i=1}^n f_{n,\epsilon}^2(X_i)
\to \E _P({f^*_{\epsilon}}^2)$ in probability and, arguing similarly for $\frac 1 n \sum_{i=1}^n f_{n,\epsilon}(X_i)$, that 
$$\hat{\sigma}_n^2\to \operatorname{Var}_P(f^*_{\epsilon})\quad \text{in probability}.$$
We conclude that 
$$   \frac{\sqrt{n}}{\hat{\sigma}_n}(S_{\epsilon}(P_n,Q)-S_{\epsilon}(P,Q))\xrightarrow{w} N(0,1)$$
and, as a consequence, that, writing $z_\beta$ for the $\beta$ quantile of the standard normal distribution,
\begin{equation}\label{ref3.10}
\left[S_{\epsilon}(P_n,Q)\pm \frac{\hat{\sigma}_n}{\sqrt{n}} z_{1-\alpha/2} \right] 
\end{equation}
is a confidence interval for $S_\epsilon(P,Q)$ of asymptotic level $1-\alpha$. A similar confidence interval can be constructed from the two-sample statistic. Such results will be illustrated in the Simulations Section.

\section{Convergence rates for optimal potentials}\label{section3}
The goal of this section is to prove a bound on the difference between empirical potentials and their population counterparts. In this section we assume that both measures, $P,Q,$ are supported in a compact set $\Omega\subset\R^d$. By translation invariance of the optimal transport problem, we may assume without loss of generality that $0 \in \Omega$. We write $D_\Omega$ for the diameter of $\Omega$ and let $(f^*,g^*)$ be a pair of optimal potentials (maximizers of \eqref{dual_entrop} for $P$ and $Q$) and $(f_n,g_n)$ their empirical counterpart (maximizers of \eqref{dual_entrop} for $P_n$ and $Q$). As noted above, these optimal potentials are unique up to an additive constant. 
In this section, we adopt the following normalization convention:
\begin{equation}
    \label{eq:expected0}
    \int g^*(y)dQ(y)=  \int g_n(y)dQ(y)=0,
\end{equation}
We show below that derivatives af the optimal potentials are uniformly bounded (see Lemma \ref{Lemma_geneway}). Additionally, the choice of optimal potentials in \eqref{eq:expected0} allows to control uniformly the optimal potentials, as we show in Lemma \ref{Lemma:control_unifrom}.
These are key ingredients for the aim of the section, namely, showing  
that the convergence rate of  $f_n$ (resp. $g_n$) towards $f^*$ (resp. $g^*$), is $O_p\left(\frac{1}{\sqrt{n}}\right)$.

The optimal potentials belong to the space  $\mathcal{C}^s(\Omega)$, for $s=\left\lceil \frac{d}{2}\right\rceil+1$, in which we consider the  norm
$$\|f \|_{\mathcal{C}^s(\Omega)}= \sum_{i=0}^s\sum_{|\alpha|= i}\|D^{\alpha} f \|_{\infty}.$$
In this section, we use the notation $c_{d, s}, c'_{d, s}, \dots$ to indicate unspecified positive constants depending on $d$ and $s$ whose value may change from line to line.
The optimality conditions~\eqref{optimalli2} imply the following bounds (see Proposition 1 in \cite{genevay2019}).
 \begin{Lemma}\label{Lemma_geneway}
 Let $\Omega \subset\R^d$ be a compact set and $P,Q\in \mathcal{P}(\Omega)$. Then the optimal potentials $(f^*, g^*)$ satisfy: 
 \begin{enumerate}
    \item[(i)]{$\min_{y\in \Omega}\{\frac{1}{2}\|x-y \|^2-g^*(y)\}\leq f^*(x)\leq \max_{y\in \Omega}\{\frac{1}{2}\|x-y \|^2-g^*(y)\}$},
    \item[(ii)] $f^*(x)$ is $D_\Omega$-Lipschitz,
    \item[(iii)] $f^*\in \mathcal{C}^{\infty}(\Omega)$ and $\|D^{\alpha}f^*\|_{\infty}\leq C_{d, \alpha} (1 + D_\Omega^{|\alpha|})$ for all multi-indices $\alpha$ with $|\alpha| \geq 1$, for some constant $C_{d, \alpha}$ depending only on $d$ and $\alpha$.
    \end{enumerate}
\end{Lemma}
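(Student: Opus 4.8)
The plan is to derive all three assertions directly from the fixed-point form of the optimality conditions~\eqref{optimalli2}, which on a compact support $\Omega$ read $f^*(x)=-\log\!\int_\Omega e^{g^*(y)-\frac12\|x-y\|^2}\,dQ(y)$ and symmetrically for $g^*$. For (i), I would simply bound the integrand pointwise: since $Q$ is a probability measure on $\Omega$, the integral $\int_\Omega e^{g^*(y)-\frac12\|x-y\|^2}\,dQ(y)$ lies between $\min_{y\in\Omega} e^{g^*(y)-\frac12\|x-y\|^2}$ and $\max_{y\in\Omega} e^{g^*(y)-\frac12\|x-y\|^2}$; taking $-\log$ and reversing the inequalities gives exactly the sandwich in (i). Note this step already shows $f^*$ is finite and bounded on $\Omega$ (and by symmetry so is $g^*$), which is needed for everything that follows.

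For (ii), I would use (i) together with the fact that, for fixed $y$, the map $x\mapsto \frac12\|x-y\|^2$ has gradient $x-y$, which has norm at most $D_\Omega$ for $x,y\in\Omega$ (using $0\in\Omega$ and the diameter bound). Concretely: for $x_1,x_2\in\Omega$, the upper bound for $f^*(x_1)$ uses some maximizing $y$; evaluating the corresponding expression at $x_2$ and using the lower bound for $f^*(x_2)$ gives $f^*(x_1)-f^*(x_2)\le \frac12\|x_1-y\|^2-\frac12\|x_2-y\|^2\le D_\Omega\|x_1-x_2\|$, and symmetrically. Alternatively, and perhaps more cleanly, differentiate the log-integral expression: $\nabla f^*(x) = \int_\Omega (x-y)\,e^{g^*(y)-\frac12\|x-y\|^2}\,dQ(y)\big/\int_\Omega e^{g^*(y)-\frac12\|x-y\|^2}\,dQ(y)$, which is a convex combination of the vectors $x-y$, hence has norm $\le D_\Omega$; this gives the Lipschitz bound and simultaneously sets up (iii).

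For (iii), the key observation is that $f^*$ is given by $-\log$ of the function $x\mapsto \Phi(x):=\int_\Omega e^{g^*(y)-\frac12\|x-y\|^2}\,dQ(y)$, and $\Phi$ is $\mathcal{C}^\infty$ because differentiation under the integral sign is justified (the integrand is smooth in $x$, $\Omega$ is compact, $g^*$ is bounded by (i)). Moreover $\Phi$ is bounded above and below by positive constants depending only on $D_\Omega$ (using that $g^*$ is bounded, with an explicit bound in terms of $D_\Omega$ coming from (i) applied to $g^*$), and each derivative $D^\alpha\Phi$ is bounded by a constant times a polynomial in $D_\Omega$ (each $x$-derivative brings down a factor like $(x_i-y_i)$, bounded by $D_\Omega$, times the exponential). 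Then $f^* = -\log\Phi$ and one applies the Faà di Bruno / chain rule: $D^\alpha(-\log\Phi)$ is a polynomial in the $D^\beta\Phi$ ($|\beta|\le|\alpha|$) and in $1/\Phi$, so combining the upper bounds on $D^\beta\Phi$ with the lower bound $\Phi\ge c(D_\Omega)>0$ yields $\|D^\alpha f^*\|_\infty \le C_{d,\alpha}(1+D_\Omega^{|\alpha|})$. I would do this by induction on $|\alpha|$ to keep the bookkeeping clean. The analogous statements for $g^*$ follow by exchanging the roles of $P$ and $Q$.

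The main obstacle is purely the bookkeeping in (iii): tracking how the constants depend on $D_\Omega$ through repeated differentiation of $-\log\Phi$, and verifying the lower bound on $\Phi$ rigorously (which requires feeding the bound on $g^*$ from part (i)—itself in terms of $\|f^*\|_\infty$, so one must close the loop, e.g. by first noting both potentials are bounded in terms of $D_\Omega$ via the sandwich in (i) applied to both). None of this is deep—it is the standard argument that a log-sum-exp of smooth functions over a compact set is smooth with controlled derivatives—but it is where the actual work lies; parts (i) and (ii) are essentially one line each. Since the statement attributes the result to Proposition 1 in \cite{genevay2019}, I would present the argument concisely and refer there for the detailed constant-chasing.
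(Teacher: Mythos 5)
Your overall strategy for (iii) — write $f^*=-\log\Phi$ with $\Phi(x)=\int e^{g^*(y)-\frac12\|x-y\|^2}dQ(y)$, apply Faà di Bruno, and bound the derivatives using compactness and a lower bound on $\Phi$ — is sound and matches the paper's in spirit, but the paper uses a small trick that makes the bookkeeping cleaner and that you miss. The paper first passes to $\bar f^*(x)=f^*(x)-\frac12\|x\|^2$, so that $-\bar f^*(x)=\log\Psi(x)$ with $\Psi(x)=\int e^{g^*(y)+\langle x,y\rangle-\frac12\|y\|^2}dQ(y)$. Because the $x$-dependence of the integrand is now \emph{linear} in the exponent, each derivative $D^\beta_x\Psi$ simply pulls down a monomial $y^\beta$; in your formulation with $\Phi$ the derivatives produce Hermite-type polynomials in $(x-y)$, not just products of $(x_i-y_i)$, so your sentence ``each $x$-derivative brings down a factor like $(x_i-y_i)$'' understates the combinatorics (it is correct for $|\alpha|=1$ only). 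Moreover, the paper bypasses your ``close the loop'' worry about a lower bound on $\Phi$: after Faà di Bruno every term is a product of ratios $D^{\beta_j}\Psi/\Psi$, and by the optimality condition this ratio is exactly $\mu_{\beta_j,g}=\int y^{\beta_j}e^{h^*(x,y)-\frac12\|x-y\|^2}dQ(y)$, i.e.\ a moment of a probability measure supported on $\Omega$, hence bounded by $D_\Omega^{|\beta_j|}$ with no separate lower bound needed. Adding back the quadratic contributes the harmless $1+D_\Omega$ term. Your route would eventually close (bounding the Hermite prefactors by $O(1+D_\Omega^{|\beta|})$ and feeding the $\|g^*\|_\infty$ bound from (i) back into a lower bound for $\Phi$), but the quadratic-subtraction plus normalization-via-optimality is the cleaner path the paper takes, and it is worth internalizing. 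Parts (i) and (ii) in your proposal are fine and the paper simply cites Proposition 1 of \cite{genevay2019} for them.
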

\begin{proof}
	The first two claims are proven in Proposition 1 of \cite{genevay2019}, so it suffices to consider the last claim. We prove it for $f^*$, the case of $g^*$ being similar. Define $\Bar{f}^*(x)=f^*(x)-\frac{1}{2}\|x\|^2$.
	As in the proof of Proposition 1 in \cite{Weed19}, the Faà di Bruno formula yields
\begin{align*}
   -D^\alpha \overline{f}^*(x)= \sum_{\beta_1 + \dots + \beta_k = \alpha}\lambda_{\alpha,\beta_1, \dots, \beta_k}\prod_{j=1}^k \mu_{\beta_j, g}, \ \ \text{for all $x\in \Omega$,}
\end{align*}
where $\lambda_{\alpha, \beta_1, \dots, \beta_k}$ are combinatorial quantities and for a multi-index $\beta$ we define
 \begin{align*}
 	 \mu_{\beta, g}& =\frac{\int y^{\beta}e^{g^*(y)-\frac{1}{2}\|y\|^2+\langle x,y \rangle}dQ(y)}{\int e^{g^*(y)-\frac{1}{2}\|y\|^2+\langle x,y \rangle}dQ(y)}\\
 	 & =\int y^{\beta}e^{h^*(x,y)-\frac{1}{2}\|x-y\|^2}dQ(y) \\
 	 & =\int \prod_{i=1}^d y_i^{\beta_i}e^{h^*(x,y)-\frac{1}{2}\|x-y\|^2}dQ(y).
 \end{align*}
By the optimality condition~\eqref{optimalli2}, $\int e^{h^*(x,y)-\frac{1}{2}\|x-y\|^2}dQ(y) = 1$.
As a consequence, there exists $C'_{d, \alpha}$ such that
$   \|D^\alpha \overline{f}^*\|_\infty \leq C'_{d,\alpha} D_{\Omega}^{|\alpha|}.
$
Since $\|D^\alpha \frac 12 \|x\|^2\|_\infty \leq 1 + D_\Omega$ for $|\alpha| \geq 1$, we obtain
$
   \|D^a {f}^*\|\leq C'_{d,\alpha} D_{\Omega}^{\alpha}+1 + D_\Omega \leq C_{d, \alpha} (1+D_\Omega^{|\alpha|})
$.
\end{proof}

\begin{Remark}
Since the probabilities $P_n$ and $Q_n$ are also supported on the same compact set $\Omega$,  Lemma~\ref{Lemma_geneway} holds also for $f_n$ and $g_n$.
\end{Remark}

We also obtain bounds on the derivatives of $e^{h^*(x, y) - \frac 12 \|x - y\|^2}$.
\begin{Lemma}\label{Lemma:appendix2}
	For any multi-index $\beta$, the function $x \mapsto x^\beta e^{h^*(x, y) - \frac 12 \|x - y\|^2}$ has $\mathcal{C}^s(\Omega)$ norm at most $c_{d, s} e^{D_\Omega^2} (1 + D_\Omega^{s + |\beta|})$.
\end{Lemma}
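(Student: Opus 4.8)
The plan is to bound the $\mathcal{C}^s(\Omega)$ norm by estimating $D^\alpha\big(x^\beta e^{h^*(x,y)-\frac12\|x-y\|^2}\big)$ pointwise for every multi-index $\alpha$ with $|\alpha|\le s$, uniformly in $x\in\Omega$ and $y\in\Omega$. The first step is to observe that by the optimality condition~\eqref{optimalli2}, $h^*(x,y)-\frac12\|x-y\|^2 = g^*(y)-\frac12\|y\|^2+\langle x,y\rangle + \overline f^*(x)$ where $\overline f^*(x)=f^*(x)-\frac12\|x\|^2$; equivalently, writing things directly, $e^{h^*(x,y)-\frac12\|x-y\|^2}=e^{f^*(x)}e^{g^*(y)-\frac12\|y\|^2+\langle x,y\rangle}$. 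So as a function of $x$ (with $y$ fixed), the exponent is $f^*(x)+\langle x,y\rangle + (\text{const in }x)$. Its first-order $x$-derivatives are $\nabla f^*(x)+y$, and its higher derivatives are exactly $D^\gamma f^*(x)$ for $|\gamma|\ge 2$, all of which are bounded by $c_{d,\gamma}(1+D_\Omega^{|\gamma|})$ by Lemma~\ref{Lemma_geneway}(iii), while $|\nabla f^*(x)+y|\le D_\Omega + D_\Omega$ by the Lipschitz bound (ii) and $y\in\Omega$, $0\in\Omega$.

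The second step is a Fa\`a di Bruno computation: $D^\alpha e^{\phi(x)}$ for a scalar function $\phi$ is a sum over partitions of $\alpha$ of terms $e^{\phi(x)}\prod_j D^{\gamma_j}\phi(x)$ with combinatorial coefficients depending only on $d$ and $|\alpha|\le s$. Using the derivative bounds just described, each such product is at most $c_{d,s}(1+D_\Omega^{s})$ (the worst case uses all the low-order derivatives, each contributing a factor $O(1+D_\Omega)$ and there are at most $s$ of them; higher-order derivatives contribute $1+D_\Omega^{|\gamma_j|}$ but then fewer factors appear). For the prefactor $e^{\phi(x)}=e^{h^*(x,y)-\frac12\|x-y\|^2}$ I use that this quantity is a probability density in $y$ integrating to $1$, so it is pointwise bounded; more carefully, to get the explicit $e^{D_\Omega^2}$ one writes $f^*(x)\le \max_{z\in\Omega}\{\frac12\|x-z\|^2-g^*(z)\}$ from Lemma~\ref{Lemma_geneway}(i) together with the normalization $\int g^*dQ=0$ (so $g^*$ has a zero-average hence is bounded by $O(D_\Omega^2)$ via its Lipschitz constant on the bounded set $\Omega$), which bounds $e^{h^*(x,y)-\frac12\|x-y\|^2}\le e^{c_{d}D_\Omega^2}\le c_d e^{D_\Omega^2}$ up to renaming constants. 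Finally, the third step handles the polynomial factor $x^\beta$ by the Leibniz rule: $D^\alpha(x^\beta u(x))=\sum_{\alpha'\le\alpha}\binom{\alpha}{\alpha'}D^{\alpha'}(x^\beta)\,D^{\alpha-\alpha'}u(x)$, and on $\Omega$ one has $|D^{\alpha'}(x^\beta)|\le c_{d,|\beta|}D_\Omega^{|\beta|-|\alpha'|}\le c_{d,|\beta|}(1+D_\Omega^{|\beta|})$, so combining with the previous bound on $D^{\alpha-\alpha'}u$ gives each summand at most $c_{d,s}e^{D_\Omega^2}(1+D_\Omega^{|\beta|})(1+D_\Omega^s)\le c_{d,s}e^{D_\Omega^2}(1+D_\Omega^{s+|\beta|})$. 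Summing over $|\alpha|\le s$ keeps the same form of bound with an updated constant $c_{d,s}$.

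The only mildly delicate point is getting the claimed form of the constant $e^{D_\Omega^2}(1+D_\Omega^{s+|\beta|})$ rather than something cruder: one must be a little careful that the exponential factor enters only once (through the bound on $e^{h^*(x,y)-\frac12\|x-y\|^2}$) and is not compounded through the Fa\`a di Bruno terms, and that the polynomial degrees from $x^\beta$ and from the derivative products add up to exactly $s+|\beta|$ in the worst case. Both are straightforward bookkeeping. I expect no real obstacle here; this is a routine consequence of Lemma~\ref{Lemma_geneway} and the explicit factorization of the exponent coming from~\eqref{optimalli2}.
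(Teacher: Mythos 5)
Your high-level strategy (Faà di Bruno for the derivatives of the exponential, Leibniz for the polynomial prefactor, all fed by the derivative bounds in Lemma~\ref{Lemma_geneway}) is the same as the paper's. However, there are two problems, one minor and one genuine.

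The minor one: your factorization $e^{h^*(x,y)-\frac12\|x-y\|^2}=e^{f^*(x)}e^{g^*(y)-\frac12\|y\|^2+\langle x,y\rangle}$ drops the factor $e^{-\frac12\|x\|^2}$; the correct $x$-dependent part of the exponent is $\overline f^*(x)+\langle x,y\rangle$ with $\overline f^*(x)=f^*(x)-\frac12\|x\|^2$, so the first-order $x$-derivative is $\nabla f^*(x)-x+y$, not $\nabla f^*(x)+y$. On $\Omega$ this only changes constants, but it should be fixed.

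The genuine gap is in the bound on the exponential prefactor. Your argument --- $g^*$ is $D_\Omega$-Lipschitz and has zero $Q$-average, so $\|g^*\|_\infty\le D_\Omega^2$, and then $f^*(x)\le\max_z\{\frac12\|x-z\|^2-g^*(z)\}\le\frac32 D_\Omega^2$ --- gives $h^*(x,y)-\frac12\|x-y\|^2\le\frac52 D_\Omega^2$, so $e^{h^*-\frac12\|x-y\|^2}\le e^{\frac52 D_\Omega^2}$. Your subsequent step ``$e^{c_d D_\Omega^2}\le c_d e^{D_\Omega^2}$ up to renaming constants'' is false: for any fixed $c>1$ and $c'>0$ the inequality $e^{cD_\Omega^2}\le c'e^{D_\Omega^2}$ fails once $D_\Omega$ is large, since $e^{(c-1)D_\Omega^2}\to\infty$. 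You cannot absorb a larger exponent into a multiplicative constant that is required to be independent of $D_\Omega$. The paper gets exactly $e^{D_\Omega^2}$ by using a sharper pointwise estimate $\|f^*\|_\infty,\|g^*\|_\infty\le\frac12 D_\Omega^2$ (Lemma~\ref{Lemma:control_unifrom}, summarized as $\|h^*\|_\infty\le D_\Omega^2$ in~\eqref{boundH}). That sharper estimate does not follow from Lipschitz-plus-zero-average; it comes from applying Jensen's inequality directly to the log in the optimality condition~\eqref{optimalli2} together with the normalization~\eqref{eq:expected0} and the positivity of $S_\epsilon(P,Q)$. You need that argument (or the statement of Lemma~\ref{Lemma:control_unifrom}) to obtain the claimed constant $c_{d,s}e^{D_\Omega^2}(1+D_\Omega^{s+|\beta|})$. (As an aside, the paper's own proof of this lemma cites Lemma~\ref{Lemma_geneway} for the $L^\infty$ bound, but Lemma~\ref{Lemma_geneway} alone does not control $\|f^*\|_\infty$ since the potentials are defined only up to an additive constant; the intended reference is the subsequent Lemma~\ref{Lemma:control_unifrom}/\eqref{boundH}, and you are implicitly running into the same issue.)

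The remaining steps of your proposal --- the Faà di Bruno bound of the derivative products by $c_{d,s}(1+D_\Omega^s)$ using Lemma~\ref{Lemma_geneway}(iii), and the Leibniz combination with $|D^{\alpha'}x^\beta|\le c(1+D_\Omega^{|\beta|})$ to get the total degree $s+|\beta|$ --- are sound and match the paper's reasoning.
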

\begin{proof}
	By Lemma~\ref{Lemma_geneway}, $\|x^\beta e^{h^*(x, y) - \frac 12 \|x - y\|^2}\|_\infty \leq D_\Omega^{|\beta|} e^{D_\Omega^2}$.
	For any $1 \leq |\alpha| \leq s$, the Faà di Bruno formula implies
	\begin{equation*}
		D^\alpha e^{h^*(x, y) - \frac 12 \|x - y\|^2} = e^{h^*(x, y) - \frac 12 \|x - y\|^2} \sum_{\lambda_1 + \dots + \lambda_s = \alpha} \gamma_{\alpha, \lambda_1,\dots, \lambda_s}\prod_{j=1}^s D^{\lambda_j} (h^*(x, y) - \frac 12 \|x - y\|^2)
	\end{equation*}
	for some combinatorial coefficients $\gamma_{\alpha, \lambda_1,\dots, \lambda_s}$, where the derivative operators are taken with respect to the $x$ variable.
	By Lemma~\ref{Lemma_geneway}, this quantity is bounded in magnitude by $c_{d, s} e^{D_\Omega^2} (1 + D_\Omega^{|\alpha|})$ for some constant $c'_{d, s}$.
	This implies
	\begin{equation*}
		|D^\alpha x^\beta  e^{h^*(x, y) - \frac 12 \|x - y\|^2}| \leq c_{d, s}'' e^{D_\Omega^2} (1 + D_\Omega^{s + |\beta|}) \quad \quad \text{for all $1 \leq |\alpha| \leq s$.}
	\end{equation*}
	Therefore, choosing $c_{d, s}$ to be a sufficiently large constant depending on $d$ and $s$ yields the claim.
\end{proof}

For our particular choice of optimal potentials we can also control the uniform norm, as follows.
\begin{Lemma}\label{Lemma:control_unifrom}
Under \eqref{eq:expected0}, we have
$$\|f^*\|_{\infty},\|f_n\|_{\infty},\|g^*\|_{\infty},\|g_n\|_{\infty} \leq {\textstyle\frac{1}{2}}D_{\Omega}^2.$$
\end{Lemma}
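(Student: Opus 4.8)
The plan is to exploit the fixed-point identities~\eqref{optimalli2} together with the elementary observation that, for $x,y$ in a set of diameter $D_\Omega$, one has $0\leq\frac{1}{2}\|x-y\|^2\leq M$ with $M:=\frac{1}{2}D_\Omega^2$. I will only treat $(f^*,g^*)$ explicitly: since $P_n$ and $Q$ are supported on the same $\Omega$ and~\eqref{eq:expected0} normalizes $g_n$ exactly as it normalizes $g^*$, the argument for $(f_n,g_n)$ is identical.

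\emph{First step.} Set $A:=\log\int e^{f^*}\,dP$ and $B:=\log\int e^{g^*}\,dQ$. From~\eqref{optimalli2}, $e^{-f^*(x)}=\int e^{g^*(y)-\frac{1}{2}\|x-y\|^2}\,dQ(y)$; bounding $e^{-\frac{1}{2}\|x-y\|^2}$ between $e^{-M}$ and $1$ for $x,y\in\Omega$ yields the envelope $-B\leq f^*(x)\leq M-B$ on $\Omega$, and symmetrically $-A\leq g^*(y)\leq M-A$ on $\Omega$ from the companion identity $g^*(y)=-\log\int e^{f^*(x)-\frac{1}{2}\|x-y\|^2}\,dP(x)$.

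\emph{Second step.} I would then pin $A$ and $B$ to the interval $[0,M]$. Integrating the envelope for $g^*$ against $Q$ and using $\int g^*\,dQ=0$ gives $0\leq A\leq M$ at once. For $B$: Jensen's inequality gives $B=\log\int e^{g^*}\,dQ\geq\int g^*\,dQ=0$, while exponentiating the envelope for $f^*$ and integrating against $P$ gives $e^{-B}\leq e^{A}\leq e^{M-B}$, hence $A\leq M-B$ and therefore $B\leq M-A\leq M$.

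\emph{Third step.} Finally, substituting $B\in[0,M]$ back into $-B\leq f^*\leq M-B$ gives $\|f^*\|_\infty\leq M$ on $\Omega$, and substituting $A\in[0,M]$ into $-A\leq g^*\leq M-A$ gives $\|g^*\|_\infty\leq M$; recalling $M=\frac{1}{2}D_\Omega^2$ completes the proof.

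The one place that needs attention is the second step: because there is no normalization imposed on $f^*$ directly, the bound $0\leq A\leq M$ has to come entirely from the normalization of $g^*$, and only afterwards can $0\leq B\leq M$ be deduced by pairing Jensen's inequality with the $f^*$-envelope integrated against $P$. Once both $A$ and $B$ are confined to $[0,M]$, the conclusion is immediate, and no further estimates are needed.
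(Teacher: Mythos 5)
Your proof is correct, but it follows a genuinely different route from the paper's. The paper first observes that $S_\epsilon(P,Q)=\int f^*\,dP+\int g^*\,dQ\geq 0$, which together with \eqref{eq:expected0} yields $\int f^*\,dP\geq 0$; it then applies Jensen's inequality to the concave $\log$ in the fixed-point identity \eqref{optimalli2} to obtain the upper bound $f^*,g^*\leq\frac12 D_\Omega^2$, and invokes the envelope bound of Lemma~\ref{Lemma_geneway}(i) (namely $f^*(x)\geq\min_y\{\frac12\|x-y\|^2-g^*(y)\}\geq-\max_y g^*(y)$) for the lower bound. You instead bypass both the non-negativity of $S_\epsilon$ and Lemma~\ref{Lemma_geneway}(i): you only bound $e^{-\frac12\|x-y\|^2}$ between $e^{-M}$ and $1$ on $\Omega\times\Omega$ to sandwich each potential between $-B$ and $M-B$ (resp. $-A$ and $M-A$) with $A=\log\int e^{f^*}\,dP$, $B=\log\int e^{g^*}\,dQ$, and then pin $A,B\in[0,M]$ using the $g^*$-normalization and one application of Jensen for $\log$. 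The sandwich argument is self-contained and elementary — it uses only \eqref{optimalli2}, the compact support, and \eqref{eq:expected0} — whereas the paper's argument leans on the earlier Lemma~\ref{Lemma_geneway} and on duality-derived non-negativity; both deliver the identical constant $\frac12 D_\Omega^2$, so nothing is lost. Your remark that the argument transfers verbatim to $(f_n,g_n)$ is also correct, since $P_n$ is supported in $\Omega$ and \eqref{eq:expected0} normalizes $g_n$ in the same way.
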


\begin{proof} Since $S_{\epsilon}(P,Q)=  \int_{\R^d} f^*(x) d P(x)+\int_{\R^d} g^*(y) d Q(y)\geq 0$, \eqref{eq:expected0} implies $\int_{\R^d} f^*(x) d P(x)\geq 0$. Therefore, using  first the optimality conditions, then Jensen's inequality and finally \eqref{eq:expected0}, we obtain 
\begin{align*}
    g^*(y)&=-\log\left(\int e^{{f^*(y)- {\textstyle \frac{1}{2}}\|x-y\|^2}}dP(y) \right)
    \leq \int \{ {\textstyle \frac{1}{2}}\|x-y\|^2-f^*(y)\}dP(y)\leq {\textstyle \frac{1}{2}}D_{\Omega}^2,
\end{align*}
for all $y\in\Omega$. By the same argument $f^*(x)\leq \frac{1}{2}D_{\Omega}^2,$ for all $x\in\Omega$. Set $x\in \Omega$, by Lemma~\ref{Lemma_geneway}, 
\begin{align*}
    f^*(x)\geq \min_{y\in \Omega}\{\textstyle\frac{1}{2}\|x-y \|^2-g^*(y)\}\geq -\max_{y\in \Omega}g^*(y)\geq -{\textstyle\frac{1}{2}}D_{\Omega}^2,
\end{align*}
and the same for $g^*$.
\end{proof}

{For any $a,b\in \mathcal{C}^s(\Omega)$ denote 
\begin{align*}
    L(a,b)=\int_{\R^d} a(x) d P(x)+\int_{\R^d} b(y) d Q(y)-\int_{\R^d\times \R^d}e^{{a(x)+b(y)- \frac{1}{2}\|x-y\|^2}} d P(x)dQ(y)+1
\end{align*}
and its semi-empirical counterpart
\begin{align*}
    L_n(a,b)=\int_{\R^d} a(x) d P_n(x)+\int_{\R^d} b(y) d Q(y)-\int_{\R^d\times \R^d}e^{{a(x)+b(y)- \frac{1}{2}\|x-y\|^2}} d P_n(x)dQ(y)+1.
\end{align*}
Let us denote by $h^*$ and $h_n$ the functions, belonging to $\mathcal{C}^s(\Omega\times\Omega)$, defined by
\begin{equation}\label{defin_h}
  \text{$h^*(x,y)=f^*(x)+g^*(y)$, $h_n(x,y)=f_n(x)+g_n(y)$,}  
\end{equation}
and $\pi^*\in \mathcal{P}(\Omega\times \Omega)$ the optimal coupling defined by  $d\pi^*=e^{h^*(x,y)-||x-y||^2} d P(x)dQ(y)$.
Abusing notation, we  write  $L(f_n,g_n)=L(h_n)$ and $L(f^*,g^*)=L(h^*)$.} 
As a consequence of Lemma~\ref{Lemma:control_unifrom} we obtain the following useful bound,
\begin{equation}
    \label{boundH}
    \text{ $\|h^* \|_{\infty}, \ \|h_n \|_{\infty}\leq  D_{\Omega}^2$, for all $n\in \N$}.
\end{equation}
At this point, we can state the main theorem of this section.
\begin{Theorem}\label{Theorem_Potential}
Let $\Omega\subset\R^d$ be a compact set and $P,Q\in \mathcal{P}(\Omega)$. Assume $(f^*,g^*)$ are optimal potentials for $P,Q$ and $(f_n,g_n)$ for $P_n,Q$ satisfying \eqref{eq:expected0}. Then there exists a constant  $C_{d}$, depending only on $d$, such that
\begin{align*}
        \E \| g_n-g^*\|_{\mathcal{C}^s(\Omega)}^2,\ \E \| f_n-f^*\|_{\mathcal{C}^s(\Omega)}^2\leq \textstyle \frac{C_{d}}{n} D_\Omega^{5(d+1)}e^{15 D_\Omega^2}
\end{align*}
Moreover,  if $(f_{n,m},g_{n,m})$ are optimal potentials for $P_n,Q_m$ satisfying \eqref{eq:expected0}, then 
\begin{align*}
        \E \| g_{n,m}-g^*\|_{\mathcal{C}^s(\Omega)}^2,\ \E \| f_{n,m}-f^*\|_{\mathcal{C}^s(\Omega)}^2\leq \textstyle \frac{C_{d}}{\min\{n, m\}} D_\Omega^{5(d+1)}e^{15 D_\Omega^2}. 
\end{align*}
\end{Theorem}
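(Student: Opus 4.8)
The plan is to derive the $\mathcal{C}^s(\Omega)$ bound on $f_n - f^*$ and $g_n - g^*$ from a quantitative strong-convexity estimate for the dual functional combined with a concentration bound for the empirical fluctuations of the objective. Concretely, I would first establish a \emph{second-order} lower bound of the form
\begin{equation*}
	L(h^*) - L(h_n) \;\geq\; c\,\big(\operatorname{Var}_{\pi^*}(h_n - h^*) \big)\, \wedge \, \big(\text{something involving }\|h_n - h^*\|\big),
\end{equation*}
i.e. show that the dual objective $L$ is strongly concave along the segment joining $h^*$ to $h_n$ in an appropriate norm. This uses that the Hessian of $L$ is $-\operatorname{Var}_{\pi^*}(\cdot)$ up to lower-order terms, and that on the compact set $\Omega$, with the normalization \eqref{eq:expected0} and the uniform bound \eqref{boundH}, the coupling density $e^{h^* - \frac12\|x-y\|^2}$ is bounded above and below, so $\operatorname{Var}_{\pi^*}$ is comparable to $\operatorname{Var}_{P\times Q}$, hence to $\|f_n - f^* - a\|_{L_2(P)}^2 + \|g_n - g^* + a\|_{L_2(Q)}^2$ for the shift $a$ forced by \eqref{eq:expected0}; the normalization kills the shift ambiguity so this is $\gtrsim \|f_n-f^*\|_{L_2(P)}^2 + \|g_n - g^*\|_{L_2(Q)}^2$.

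Next I would bound the \emph{suboptimality gap} from above by an empirical-process term. Since $h_n$ maximizes $L_n$ and $h^*$ maximizes $L$, writing $L - L_n = (P - P_n)\otimes(\text{stuff})$, a standard argument gives
\begin{equation*}
	0 \leq L(h^*) - L(h_n) \leq (L - L_n)(h_n) - (L - L_n)(h^*) = (P_n - P)\big[\psi_{h^*} - \psi_{h_n}\big],
\end{equation*}
where $\psi_h(x) = e^{\,\cdot}$-type integrated function minus $h$ itself, evaluated in the $x$ variable. The point is that $\psi_{h^*} - \psi_{h_n}$ is a difference of functions controlled in $\mathcal{C}^s(\Omega)$ by $\|h^* - h_n\|_{\mathcal{C}^s(\Omega)}$ times a factor like $e^{D_\Omega^2}(1+D_\Omega^{s})$, using Lemma~\ref{Lemma:appendix2} and Lemma~\ref{Lemma_geneway}. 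Taking expectations and using that the empirical process over a bounded subset of $\mathcal{C}^s(\Omega)$ (which is Donsker for $s > d/2$) satisfies $\E \|P_n - P\|_{(\mathcal{C}^s(\Omega))'}^2 \leq c/n$ — here is exactly where $s = \lceil d/2\rceil + 1$ enters — yields $\E[L(h^*) - L(h_n)] \lesssim n^{-1/2}\, \E^{1/2}\|h_n - h^*\|_{\mathcal{C}^s(\Omega)}^2 \cdot (\text{$D_\Omega$ factors})$.

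Then I would bootstrap from $L_2$ control to $\mathcal{C}^s$ control. The lower bound gives $\E \|h_n - h^*\|_{L_2}^2 \lesssim \E[L(h^*)-L(h_n)]$, so combining the two displays, $\E \|h_n - h^*\|_{L_2}^2 \lesssim n^{-1/2} \E^{1/2}\|h_n-h^*\|_{\mathcal{C}^s}^2$. To upgrade, I would use an interpolation/elliptic-regularity step: the optimality conditions \eqref{optimalli2} express $f_n$ as a smooth (log-integral) functional of $g_n$ against $P_n$, and differentiating shows that $\|f_n - f^*\|_{\mathcal{C}^s(\Omega)}$ is controlled by $\|g_n - g^*\|_{L_1(Q)}$ plus $\|(P_n - P)(\text{smooth kernel})\|$, both of which are already under control; symmetrically for $g_n$. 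Feeding the $L_2$ bound (which on compact $\Omega$ dominates $L_1$) back through these fixed-point relations turns the $n^{-1/2}\E^{1/2}\|\cdot\|_{\mathcal{C}^s}^2$ into a genuine $n^{-1}$ rate after solving the resulting quadratic inequality $x \lesssim n^{-1/2}\sqrt{x} + n^{-1}$ in $x = \E\|h_n - h^*\|_{\mathcal{C}^s}^2$. Tracking the $D_\Omega$ and $e^{D_\Omega^2}$ factors through Lemmas~\ref{Lemma_geneway}--\ref{Lemma:appendix2} produces the stated $D_\Omega^{5(d+1)} e^{15 D_\Omega^2}$ dependence. The two-sample statement follows by inserting an intermediate term $S(P,Q_m)$ and applying the one-sample bound to each of $\|f_{n,m}-f_m\|$, $\|f_m - f^*\|$ (and symmetrically), giving the $\min\{n,m\}^{-1}$ rate exactly as in the proof of Lemma~\ref{Lemma:opequena}.

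The main obstacle I anticipate is the strong-concavity lower bound: making the Hessian-based argument rigorous requires controlling the objective \emph{uniformly along the whole segment} $t \mapsto h^* + t(h_n - h^*)$, not just at the endpoints, and ensuring the variance lower bound does not degenerate — this is where the compactness of $\Omega$, the uniform bounds \eqref{boundH} on $h^*$ and $h_n$, and the normalization \eqref{eq:expected0} (which pins down the additive constant and prevents the $L_2$ norm from being dominated by an irrelevant shift) all have to be used together carefully. The second delicate point is the bootstrap: one must be sure the map from $L_2$-control to $\mathcal{C}^s$-control via \eqref{optimalli2} does not lose powers of $n$, which relies on the smoothing property of integrating against the Gaussian-type kernel $e^{-\frac12\|x-y\|^2}$ on a bounded domain.
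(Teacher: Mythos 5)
Your proposal is correct and follows essentially the same path as the paper: strong concavity of the dual objective (Lemma~\ref{Lemma:cuadratic}), a Lipschitz bound via the empirical process on a $\mathcal{C}^s$-ball (Lemma~\ref{lemma:lips} plus the covering-number estimate for $\E\|P-P_n\|_{\mathcal{C}_1^s(\Omega)}^2$), a bootstrap from $L^2$ to $\mathcal{C}^s$ through the fixed-point optimality conditions (Lemmas~\ref{Lemma:technical1} and~\ref{Lemma_uniform_poten}), and then solving a quadratic inequality. One small correction and one small simplification you could absorb from the paper: $f_n$ is the log-integral of $g_n$ against $Q$, not $P_n$ (and $g_n$ against $P_n$), so the empirical-process term $\|P-P_n\|_{\mathcal{C}_1^s(\Omega)}$ enters the bootstrap only through the $g$-side estimate (Lemma~\ref{Lemma:technical1} makes this asymmetry explicit); and the strong-concavity lower bound is obtained from the elementary pointwise inequality $e^x\geq 1+x+\tfrac{1}{2}e^{-|x|}x^2$ applied under $\pi^*$, which sidesteps the uniform-along-the-segment Hessian issue you flagged.
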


The proof is divided in a sequence of technical lemmas, of some independent interest. We show first (Lemma \ref{Lemma:cuadratic}) that the functional $L$ is well-behaved in the sense of being strongly concave near its maximum. Then (in Lemma \ref{lemma:lips}) we show that the functional $L_n-L$ is Lipsichtz. Typically, these two results are enough to prove convergence at the fast $n^{-1}$ rate (see, e.g., Theorem 3.2.5 of \cite{Vart_Well}).
Unfortunately, in our case, the norms appearing in Lemmas~\ref{Lemma:cuadratic} and \ref{lemma:lips} are different. This technical issue can be handled thanks to Lemmas \ref{Lemma:technical1} and \ref{Lemma_uniform_poten}.
\begin{Lemma}\label{Lemma:cuadratic}
Let $\Omega\subset\R^d$ be a compact set and $P,Q\in \mathcal{P}(\Omega)$, then 
\begin{align}\label{condition cuadratic}
    L(h_n)- L(h^*)&\leq -  {\textstyle \frac{1}{2}}\|h_n-h^* \|_{L^2(d\pi^*)}^2e^{-\|h_n-h^*\|_{\infty}},
\end{align}
where $h^*$, $h_n$ and $\pi^*$ are defined in \eqref{defin_h}.
\end{Lemma}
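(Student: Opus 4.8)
The plan is to express $L(h_n)-L(h^*)$ as a single integral against $\pi^*$ and then apply an elementary convexity estimate for the exponential. Since (as already noted in the text) $L$ depends only on $h=f+g$, write $L(h)=\int h\,dP\,dQ-\int e^{h(x,y)-\frac12\|x-y\|^2}\,dP\,dQ+1$, set $\delta:=h_n-h^*$, and use $\frac{d\pi^*}{d(P\times Q)}=e^{h^*(x,y)-\frac12\|x-y\|^2}$, so that $e^{h_n(x,y)-\frac12\|x-y\|^2}-e^{h^*(x,y)-\frac12\|x-y\|^2}=(e^{\delta(x,y)}-1)\,\frac{d\pi^*}{d(P\times Q)}(x,y)$. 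The difference then collapses to
\begin{equation*}
L(h_n)-L(h^*)=\int\delta\,dP\,dQ-\int\bigl(e^{\delta}-1\bigr)\,d\pi^*.
\end{equation*}
The crucial point is that the optimality conditions \eqref{optimalli} defining $(f^*,g^*)$ say precisely that $\pi^*$ has marginals $P$ and $Q$; since $\delta(x,y)=(f_n-f^*)(x)+(g_n-g^*)(y)$ is separable, this yields $\int\delta\,d\pi^*=\int\delta\,dP\,dQ$, whence
\begin{equation*}
L(h_n)-L(h^*)=\int\bigl(\delta-e^{\delta}+1\bigr)\,d\pi^*.
\end{equation*}

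To finish I would invoke the elementary inequality $e^{u}-u-1\geq\tfrac12 u^2 e^{-|u|}$, valid for all $u\in\R$, which follows by writing $e^{u}-u-1=\int_0^u\!\int_0^s e^{r}\,dr\,ds$ and bounding $e^{r}\geq e^{-|u|}$ on the segment between $0$ and $u$. Applying this pointwise with $u=\delta(x,y)$, using $e^{-|\delta(x,y)|}\geq e^{-\|\delta\|_\infty}$ (the sup-norm being finite since $h_n,h^*\in\mathcal{C}^s(\Omega\times\Omega)$, cf. \eqref{boundH}), and integrating against $\pi^*$ gives
\begin{equation*}
L(h_n)-L(h^*)\leq-\tfrac12 e^{-\|h_n-h^*\|_\infty}\int(h_n-h^*)^2\,d\pi^*=-\tfrac12\|h_n-h^*\|_{L^2(d\pi^*)}^2\,e^{-\|h_n-h^*\|_\infty},
\end{equation*}
which is \eqref{condition cuadratic}. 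An equivalent route is to Taylor-expand $\phi(t):=L\bigl(h^*+t(h_n-h^*)\bigr)$ about $t=0$: the marginal identity above is exactly $\phi'(0)=0$, one computes $\phi''(t)=-\int\delta^2 e^{t\delta}\,d\pi^*\leq-e^{-\|\delta\|_\infty}\|\delta\|_{L^2(d\pi^*)}^2$, and $L(h_n)-L(h^*)=\int_0^1(1-t)\phi''(t)\,dt$ together with $\int_0^1(1-t)\,dt=\tfrac12$ give the same bound.

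The only step that requires genuine care — and the only place the structure of the problem enters nontrivially — is the identity $\int\delta\,d\pi^*=\int\delta\,dP\,dQ$, which hinges on $\pi^*$, although defined as a reweighting of $P\times Q$, having marginals $P$ and $Q$, so that the linear term in $L$ is unchanged when integrated against $\pi^*$. Everything else is bookkeeping plus a one-line convexity estimate, so I do not anticipate a serious obstacle.
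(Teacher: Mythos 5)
Your argument is correct and essentially the same as the paper's: both rely on the pointwise bound $e^{u}\geq 1+u+\tfrac12 u^{2}e^{-|u|}$ applied to $u=h_n-h^*$, together with the fact that $\pi^*$ has marginals $P$ and $Q$ (a direct consequence of the optimality conditions) to cancel the linear term. Your reorganization — collapsing $L(h_n)-L(h^*)$ to the single integral $\int(\delta-e^\delta+1)\,d\pi^*$ before invoking the convexity estimate, and the equivalent Taylor route via $\phi(t)=L(h^*+t\delta)$ — is a cleaner presentation of the same computation.
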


\begin{proof} The inequality $e^x\geq 1+x+ \frac{e^{-|x|}}{2}x^2$, which can be checked by elementary means, implies that 
\begin{eqnarray*}
\lefteqn{\int e^{{h_n(x,y)- \frac{1}{2}||x-y||^2}} d P(x)dQ(y)}\hspace*{1cm}\\
    &=&\int e^{h_n(x,y)-h^*(x,y)}e^{h^*(x,y)- \frac{1}{2}||x-y||^2} d P(x)dQ(y)\\
    &\geq & \int \left\lbrace 1+ (h_n(x,y)-h^*(x,y)) +{\textstyle\frac{1}{2}}(h_n(x,y)-h^*(x,y))^2e^{-|h_n(x,y)-h^*(x,y)|} \right\rbrace d\pi^*(x,y)\\
    &\geq & \int \left\lbrace 1+ (h_n(x,y)-h^*(x,y))+{\textstyle\frac{1}{2}}(h_n(x,y)-h^*(x,y))^2e^{-||h_n-h^*||_{\infty}}\right\rbrace d\pi^*(x,y).
\end{eqnarray*}
The optimality conditions yield $L(h^*)=\int h^*(x,y)d P(x)dQ(y)$. Hence,
\begin{align*}
            \int & e^{{h_n(x,y)- \frac{1}{2}||x-y||^2}} d P(x)dQ(y)
    \\
    &\geq -L(h^*)+ \int \left \lbrace 1+ h_n(x,y)+{\textstyle\frac{1}{2}}(h_n(x,y)-h^*(x,y))^2e^{-||h_n-h^*||_{\infty}}\right\rbrace d\pi^*(x,y).
\end{align*}
We conclude by  using the relation  $ \int h_n(x,y) d\pi^*= \int h_n(x,y)d P(x)dQ(y)$, which follows from the optimality conditions.
\end{proof}

\begin{Lemma}\label{lemma:lips}
Under the assumptions of Lemma~\ref{Lemma:cuadratic}, we have 
\begin{equation}
   L_n(h_n)-L(h_n)-L_n(h^*)+L(h^*)\leq \|P-P_n\|_{\mathcal{C}^s_1(\Omega)} \|h_n-h^*\|_{\mathcal{C}^s(\Omega^2)}, \ \ a.s.
\end{equation}
where 
\begin{equation}\label{def:dualnorm}
   \|P-P_n\|_{\mathcal{C}^s_1(\Omega)}:=\sup_{\|f\|_{\mathcal{C}^s(\Omega)}\leq 1 }\int f(x)(dP_n(x)-dP(x)).
\end{equation}
\end{Lemma}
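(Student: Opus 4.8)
The plan is to expand the difference $L_n(h_n)-L(h_n)-L_n(h^*)+L(h^*)$ directly from the definitions of $L$ and $L_n$ and observe that, since $L_n$ and $L$ differ only by replacing $P$ with $P_n$ in the $x$-integrals, all terms involving only $Q$ cancel. Concretely, writing $a=f_n$, $b=g_n$ for $h_n$ and $a=f^*$, $b=g^*$ for $h^*$, the $\int b\, dQ$ terms appear identically in $L_n$ and $L$ and hence drop out of the combination. What survives is
\begin{equation*}
	\int f_n\, d(P_n-P) - \int f^*\, d(P_n-P) - \int \Big( e^{h_n(x,y)-\frac12\|x-y\|^2} - e^{h^*(x,y)-\frac12\|x-y\|^2}\Big)\, d(P_n-P)(x)\, dQ(y).
\end{equation*}
So the quantity of interest equals $\int \varphi(x)\, d(P_n-P)(x)$ for the single function $\varphi(x) = (f_n-f^*)(x) - \int \big(e^{h_n(x,y)-\frac12\|x-y\|^2} - e^{h^*(x,y)-\frac12\|x-y\|^2}\big)\, dQ(y)$.

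The second step is to bound $\|\varphi\|_{\mathcal{C}^s(\Omega)}$ by $\|h_n-h^*\|_{\mathcal{C}^s(\Omega^2)}$ (up to the absorbed $d,s$-constants and $D_\Omega$-factors). For the first piece this is immediate since $(f_n-f^*)(x)$ is a coordinate slice of $h_n-h^*$. For the second piece, I would use that for fixed $y$ the map $x\mapsto e^{h_n(x,y)-\frac12\|x-y\|^2}-e^{h^*(x,y)-\frac12\|x-y\|^2}$ has $\mathcal{C}^s(\Omega)$-norm controlled by $\|h_n-h^*\|_{\mathcal{C}^s(\Omega^2)}$: writing $e^{h_n-\frac12\|x-y\|^2}-e^{h^*-\frac12\|x-y\|^2} = e^{h^*-\frac12\|x-y\|^2}(e^{h_n-h^*}-1)$ and applying the Faà di Bruno / product rule as in Lemmas~\ref{Lemma:appendix2} and \ref{Lemma_geneway}, together with the uniform bounds \eqref{boundH} on $\|h^*\|_\infty,\|h_n\|_\infty$ and on the derivatives of $h^*$, one gets a bound of the form $c_{d,s}e^{D_\Omega^2}(1+D_\Omega^{s})\|h_n-h^*\|_{\mathcal{C}^s(\Omega^2)}$, uniformly in $y\in\Omega$; integrating against $Q$ (a probability measure) preserves this. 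Pairing $\varphi$ against $d(P_n-P)$ and invoking the definition \eqref{def:dualnorm} of the dual norm then gives $\big|\int\varphi\, d(P_n-P)\big|\le \|P-P_n\|_{\mathcal{C}^s_1(\Omega)}\,\|\varphi\|_{\mathcal{C}^s(\Omega)}$, and absorbing the geometric constants (which I expect the statement tacitly allows, or else they are carried along and reappear in Theorem~\ref{Theorem_Potential}'s $D_\Omega^{5(d+1)}e^{15D_\Omega^2}$) yields the claimed inequality.

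The main obstacle I anticipate is the bookkeeping in the second step: controlling the $\mathcal{C}^s$-norm of the difference of the two exponentials uniformly in $y$, since differentiating $e^{h(x,y)-\frac12\|x-y\|^2}$ in $x$ produces, via Faà di Bruno, products of mixed partials of $h$ and of $\frac12\|x-y\|^2$, and one must check that the $\|x-y\|^2$-derivatives contribute only $D_\Omega$-powers and that the difference structure $e^{h_n}-e^{h^*}$ is genuinely linear in $h_n-h^*$ to leading order (so the bound is proportional to $\|h_n-h^*\|_{\mathcal{C}^s(\Omega^2)}$ rather than merely continuous in it). This is exactly the kind of computation already carried out in Lemmas~\ref{Lemma_geneway} and \ref{Lemma:appendix2}, so I would cite those and only indicate the modification needed to pass from a single exponential to a difference of two.
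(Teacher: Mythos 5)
Your expansion of $L_n(h_n)-L(h_n)-L_n(h^*)+L(h^*)$ is algebraically correct, and the final pairing step against $\|P-P_n\|_{\mathcal{C}^s_1(\Omega)}$ is the right idea, but there is a genuine gap: you treat the exponential term
\[
\varphi_2(x):=\int \Big(e^{h_n(x,y)-\frac12\|x-y\|^2}-e^{h^*(x,y)-\frac12\|x-y\|^2}\Big)\,dQ(y)
\]
as a quantity to be estimated via Fa\`a di Bruno, yielding a bound with extra factors of the form $c_{d,s}e^{D_\Omega^2}(1+D_\Omega^s)$ that the lemma, as stated with constant exactly $1$, does not permit. In fact $\varphi_2\equiv 0$. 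Both $(f_n,g_n)$ and $(f^*,g^*)$ satisfy the first optimality (Schr\"odinger) condition in \eqref{optimalli}, i.e.\ $\int e^{h_n(x,y)-\frac12\|x-y\|^2}dQ(y)=1$ and $\int e^{h^*(x,y)-\frac12\|x-y\|^2}dQ(y)=1$ for every $x$, since the second marginal is $Q$ in both the $(P_n,Q)$ and the $(P,Q)$ problems. This is precisely what the paper exploits, in the equivalent form $L(h)=\int h\,dP\,dQ$ and $L_n(h)=\int h\,dP_n\,dQ$ for $h\in\{h_n,h^*\}$, so that the four-term combination collapses immediately to $\int (h_n-h^*)(x,y)\,dQ(y)\,(dP_n-dP)(x)$.

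Once you observe $\varphi_2\equiv 0$, the remaining piece is the pairing of $f_n-f^*$ against $P_n-P$, and here one must be slightly careful about the constant ambiguity you glossed over when calling $f_n-f^*$ a ``coordinate slice'' of $h_n-h^*$: a slice $(h_n-h^*)(\cdot,y_0)$ differs from $f_n-f^*$ by the constant $(g_n-g^*)(y_0)$, so $\|f_n-f^*\|_\infty$ is not directly controlled by $\|h_n-h^*\|_\infty$. This is harmless because $\int(dP_n-dP)=0$ makes the pairing invariant under adding constants to $f$, so one may replace $f_n-f^*$ by a genuine slice of $h_n-h^*$ (whose $\mathcal{C}^s(\Omega)$-norm is $\le\|h_n-h^*\|_{\mathcal{C}^s(\Omega^2)}$) before applying the definition \eqref{def:dualnorm}. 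The paper handles exactly this point when passing from the supremum over $h=f+g$ with $\|h\|_{\mathcal{C}^s(\Omega^2)}\le 1$ to the supremum over $\|f\|_{\mathcal{C}^s(\Omega)}\le 1$. With those two observations your proof would coincide with the paper's and yield the exact constant-free inequality; without them, it proves a strictly weaker statement.
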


\begin{proof} As noted above, the optimality conditions imply that
\begin{align*}
    L_n(h_n)&=\int h_n(x,y)dP_n(x)dQ(y),\quad L(h_n)=\int h_n(x,y)dP(x)dQ(y),\\
    L_n(h^*)&=\int h^*(x,y)dP_n(x)dQ(y),\quad L(h^*)=\int h^*(x,y)dP(x)dQ(y).
        \end{align*}
Therefore we have
\begin{align*}
     & L_n(h_n)-L(h_n)-L_n(h^*)+L(h^*)\\
      &=\int h_n(x,y)dQ(y)(dP_n(x)-dP(x))-\int h^*(x,y)dQ(y)(dP_n(x)-dP(x))\\
       &=\int ( h_n(x,y)-h^*(x,y))dQ(y)(dP_n(x)-dP(x))\\   
       &\leq\|h_n-h^*\|_{\mathcal{C}^s(\Omega)}\sup_{\substack{\|h\|_{\mathcal{C}^s(\Omega^2)}\leq 1 \\ h(x,y)=f(x)+g(y)}}\int h(x,y)dQ(y)(dP_n(x)-dP(x)).
\end{align*}
Note that
\begin{eqnarray*}
\lefteqn{\sup_{\substack{\|h\|_{\mathcal{C}^s(\Omega^2)}\leq 1 \\ h(x,y)=f(x)+g(y)}}\int h(x,y)dQ(y)(dP_n(x)-dP(x))}\hspace*{1cm}\\
&=& \sup_{\substack{\|h\|_{\mathcal{C}^s(\Omega^2)}\leq 1 \\ h(x,y)=f(x)+g(y)}}\int g(y)dQ(y)(dP_n(x)-dP(x))+\int f(x)dQ(y)(dP_n(x)-dP(x)).
\end{eqnarray*}
Since the first term is $0$ and the second is not affected by adding constant to $f$, we see that it equals
$$  \sup_{\|f \|_{\mathcal{C}^s(\Omega)}\leq 1 }\int f(x)(dP_n(x)-dP(x)). $$
\end{proof}

As anticipated, Lemma~\ref{lemma:lips} works with the norm $\|\cdot\|_{\mathcal{C}^s(\Omega^2)}^2$ and Lemma~\ref{Lemma:cuadratic} with $\|\cdot\|_{L^2(d\pi^*)}$. Both norms are different, but the next technical results show how these norms are related in the present setup.
\begin{Lemma}\label{Lemma:technical1}
Under the assumptions of Lemma~\ref{Lemma:cuadratic},
\begin{align*}
   & \| D^\alpha f^*- D^\alpha f_n\|_{\infty}^2 \leq c_{d,s} D_{\Omega}^{2 |\alpha|} \|h_n-h^*\|_{\infty}^2,\\
    &\| D^\alpha g^*- D^\alpha g_n\|_{\infty}^2 \leq c_{d,s}D_{\Omega}^{2|\alpha|} e^{ 2D_{\Omega}^2}\left(\|h_n-h^* \|_{\infty}^2+D_{\Omega}^{2s}\|P-P_n\|_{\mathcal{C}^s_1(\Omega)}^2\right),
\end{align*}
   for every multi-index $\alpha$, with $1\leq |\alpha|\leq s$.
\end{Lemma}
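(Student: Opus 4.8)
The plan is to differentiate the implicit log-sum-exp representations of the potentials furnished by the optimality conditions, exactly as in the proof of Lemma~\ref{Lemma_geneway}, and to compare the resulting expressions for $(f^*,g^*)$ and $(f_n,g_n)$. Put $\bar f^*=f^*-\frac12\|\cdot\|^2$, $\bar f_n=f_n-\frac12\|\cdot\|^2$, and similarly $\bar g^*,\bar g_n$; since $D^\alpha(\frac12\|\cdot\|^2)$ does not involve the potentials when $|\alpha|\ge 1$, we have $D^\alpha f^*-D^\alpha f_n=D^\alpha\bar f^*-D^\alpha\bar f_n$ and likewise for $g$. The Fa\`a di Bruno computation in the proof of Lemma~\ref{Lemma_geneway} gives $-D^\alpha\bar f^*(x)=\sum_{\beta_1+\dots+\beta_k=\alpha}\lambda_{\alpha,\beta_1,\dots,\beta_k}\prod_{j=1}^k\mu^*_{\beta_j}(x)$, where $\mu^*_\beta(x)=\int y^\beta e^{h^*(x,y)-\frac12\|x-y\|^2}dQ(y)$, and the \emph{same} identity (same combinatorial $\lambda$'s) holds for $\bar f_n$ with $\mu^n_\beta(x)=\int y^\beta e^{h_n(x,y)-\frac12\|x-y\|^2}dQ(y)$; the symmetric statements express $D^\alpha\bar g^*-D^\alpha\bar g_n$ through $\nu^*_\beta(y)=\int x^\beta e^{h^*(x,y)-\frac12\|x-y\|^2}dP(x)$ and $\nu^n_\beta(y)=\int x^\beta e^{h_n(x,y)-\frac12\|x-y\|^2}dP_n(x)$. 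By the optimality conditions each of $\mu^*_\beta,\mu^n_\beta,\nu^*_\beta,\nu^n_\beta$ is the integral of a monomial of degree $|\beta|$ against a probability measure on $\Omega$, hence bounded by $D_\Omega^{|\beta|}$ (recall $0\in\Omega$). Using $|\prod a_j-\prod b_j|\le\sum_l(\prod_{j<l}|a_j|)\,|a_l-b_l|\,(\prod_{j>l}|b_j|)$ together with $\sum_j|\beta_j|=|\alpha|$ and $k\le|\alpha|\le s$, both claims reduce to controlling $|\mu^*_\beta(x)-\mu^n_\beta(x)|$ and $|\nu^*_\beta(y)-\nu^n_\beta(y)|$ by $D_\Omega^{|\beta|}$ times (a constant times) the quantities on the right-hand sides of the lemma.

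For the $f$-bound, fix $x$ and set $u(y)=h^*(x,y)-\frac12\|x-y\|^2$, $v(y)=h_n(x,y)-\frac12\|x-y\|^2$, so that $\mu^*_\beta(x)-\mu^n_\beta(x)=\int y^\beta(e^u-e^v)dQ(y)$. The point — and the reason no exponential appears in the first inequality — is that $f^*$ and $f_n$ are obtained by integrating against the \emph{same} measure $Q$, so $\int e^u\,dQ=\int e^v\,dQ=1$ by \eqref{optimalli}; combined with $|e^u-e^v|\le e^{\max(u,v)}|u-v|\le(e^u+e^v)\|h^*-h_n\|_\infty$ this yields $\int|e^u-e^v|\,dQ\le 2\|h^*-h_n\|_\infty$. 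Hence $|\mu^*_\beta(x)-\mu^n_\beta(x)|\le 2D_\Omega^{|\beta|}\|h^*-h_n\|_\infty$, and inserting this into the telescoping sum and squaring gives $\|D^\alpha f^*-D^\alpha f_n\|_\infty^2\le c_{d,s}D_\Omega^{2|\alpha|}\|h^*-h_n\|_\infty^2$.

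For the $g$-bound, $\nu^n_\beta$ now involves $P_n\ne P$, so I split $\nu^*_\beta(y)-\nu^n_\beta(y)=(A)+(B)$ with $(A)=\int x^\beta\bigl(e^{h^*(x,y)-\frac12\|x-y\|^2}-e^{h_n(x,y)-\frac12\|x-y\|^2}\bigr)dP(x)$ and $(B)=\int x^\beta e^{h_n(x,y)-\frac12\|x-y\|^2}d(P-P_n)(x)$. Term $(A)$ is estimated as above, except that now only $\int e^{h^*(x,y)-\frac12\|x-y\|^2}dP(x)=1$ holds while $\int e^{h_n(x,y)-\frac12\|x-y\|^2}dP(x)\le e^{D_\Omega^2}$ by \eqref{boundH}; this gives $|(A)|\le D_\Omega^{|\beta|}(1+e^{D_\Omega^2})\|h^*-h_n\|_\infty$ and is exactly where the factor $e^{2D_\Omega^2}$ in the second inequality originates. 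For $(B)$, the definition \eqref{def:dualnorm} of $\|P-P_n\|_{\mathcal{C}^s_1(\Omega)}$ gives $|(B)|\le\|x\mapsto x^\beta e^{h_n(x,y)-\frac12\|x-y\|^2}\|_{\mathcal{C}^s(\Omega)}\,\|P-P_n\|_{\mathcal{C}^s_1(\Omega)}$, and Lemma~\ref{Lemma:appendix2}, which applies verbatim with $h_n$ in place of $h^*$ by the Remark following Lemma~\ref{Lemma_geneway}, bounds that $\mathcal{C}^s$ norm by $c_{d,s}e^{D_\Omega^2}(1+D_\Omega^{s+|\beta|})$. Combining $(A)$ and $(B)$, feeding them into the telescoping sum, and using $D_\Omega^{|\alpha|-|\beta_l|}(1+D_\Omega^{s+|\beta_l|})\le c\,D_\Omega^{|\alpha|+s}$ for $D_\Omega\ge 1$ (the regime $D_\Omega<1$ being harmless) produces the second inequality after squaring.

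The only genuinely non-routine step is the one that keeps the $f$-bound free of an exponential: it rests on the elementary observation $e^{\max(u,v)}\le e^u+e^v$ together with the fact that $e^u\,dQ$ and $e^v\,dQ$ are both probability measures, so the potentially large exponential factor integrates to the innocuous constant $2$. This is precisely the structural advantage of $f$ over $g$ — the defining integral for the $f$-potential uses the fixed reference measure $Q$ on both the population and the empirical side, whereas the one for $g$ switches from $P$ to $P_n$, which forces the appearance of the $\|P-P_n\|_{\mathcal{C}^s_1(\Omega)}$ term and of the crude bound $e^{D_\Omega^2}$. Everything else (the Fa\`a di Bruno bookkeeping, the telescoping estimate, and tracking powers of $D_\Omega$) is elementary.
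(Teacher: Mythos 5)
Your proposal is correct and follows essentially the same route as the paper: compare the $\log$-sum-exp representations of the potentials via Faà di Bruno as in Lemma~\ref{Lemma_geneway}, pass through $\bar f, \bar g$, telescope the resulting products of moment-type integrals, and use the optimality conditions to see that both $e^{h^*(x,\cdot)-\frac12\|x-\cdot\|^2}\,dQ$ and $e^{h_n(x,\cdot)-\frac12\|x-\cdot\|^2}\,dQ$ are probability measures (the key point that keeps the $f$-bound exponential-free), whereas the $g$-bound picks up an $e^{D_\Omega^2}$ and a $\|P-P_n\|_{\mathcal{C}^s_1(\Omega)}$ contribution because $P_n\neq P$. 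The only (cosmetic) deviation is in the split of $\nu^*_\beta-\nu^n_\beta$: you put the difference of exponentials against $dP$ and $e^{h_n}$ against $(P-P_n)$ (invoking Lemma~\ref{Lemma:appendix2} for $h_n$ via the Remark after Lemma~\ref{Lemma_geneway}), while the paper does the symmetric split with $dP_n$ and $e^{h^*}$; both yield the same bound. Your explicit remark that the $f$-potential is defined through the fixed reference measure $Q$ on both sides, while the $g$-potential switches from $P$ to $P_n$, is the right structural explanation and is implicit in the paper.
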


\begin{proof}
	We let $c_{d, s}$ denote a positive constant depending on $d$ and $s$ whose value may change from line to line.
	We note first that $f^*(x)-f_n(x)=\Bar f^*(x)-\Bar f_n(x)$, where
\begin{align}\label{def_f_bar}
    \begin{split}
        \Bar{f}^*(x)=f^*(x)-{\textstyle \frac{1}{2}}\|x\|^2 \ \ \text{and} \ \  \Bar{f}_n(x)=f_n(x)-{\textstyle \frac{1}{2}}\|x\|^2.
    \end{split}
\end{align}
As in the proof of Lemma~\ref{Lemma_geneway}, the Faá di Bruno formula implies
\begin{multline*}
	 D^\alpha f_n(x)-D^\alpha f^*(x)=  D^\alpha \overline{f}_n(x)-D^\alpha \overline{f}^*(x)\\
	=  \sum_{\beta_1 + \dots + \beta_s = \alpha} \lambda_{\alpha, \beta_1, \dots, \beta_s}\left(\prod_{j=1}^s \int y^{\beta_j}e^{h_n(x,y)-\frac{1}{2}\|x-y\|^2}dQ(y)-\prod_{j=1}^s \int y^{\beta_j}e^{h^*(x,y)-\frac{1}{2}
		\|x-y\||^2}dQ(y)\right).
\end{multline*}
Splitting the product, this last term equals
\begin{multline*}
	     \sum_{\beta_1 + \dots + \beta_s = \alpha} \lambda_{\alpha, \beta_1, \dots, \beta_s}\sum_{i=1}^s\prod_{j< i} \int y^{\beta_j}e^{h_n(x,y)-\frac{1}{2}\|x-y\|^2}dQ(y)\\ 
	     \prod_{j>i} \int y^{\beta_j}e^{h^*(x,y)-\frac{1}{2}\|x-y\|^2}dQ(y)  \int y^{\beta_i} e^{-\frac{1}{2}\|x-y\|^2}\{ e^{h_n(x,y)}- e^{h^*(x,y)}\}dQ(y).
\end{multline*}
Since $0 \in \Omega$, it follows that $|y^{\beta_j}| \leq D_{\Omega}^{|\beta_j|}$. Using that $ |e^x-e^y|\leq (e^y+e^{x})|x-y| $ we upper bound $|D^\alpha f_n(x)-D^\alpha f^*(x)|$ by
\begin{multline*}
	 D_{\Omega}^{|\alpha|} \sum_{\beta_1 + \dots + \beta_s = \alpha} |\lambda_{\alpha, \beta_1, \dots, \beta_s}|\sum_{i=1}^s \int (e^{{h^*(x,y)}-\frac{1}{2}\|x-y\|^2}+e^{{h_n(x,y)}-\frac{1}{2}\|x-y\|^2})| h_n(x,y)- h^*(x,y)|dQ(y) \\
	\leq 2s D_{\Omega}^{ |\alpha|}  \sum_{\beta_1 + \dots + \beta_s = \alpha} |\lambda_{\alpha, \beta_1, \dots, \beta_s}| \|h_n - h^*\|_\infty,
\end{multline*}
where we have used \eqref{optimalli2} to bound the integral.
We conclude that $ \|D^af_n(x)-D^a f^*(x)\|_{\infty}^2 \leq c_{d, s} D_\Omega^{2 |\alpha|}\|h_n - h^*\|_\infty^2$.

Turning to $g_n$ and $g_n^*$, we can argue similarly to obtain  
\begin{multline*}
	    |D^\alpha g_n(y)-D^\alpha g^*(y)|=  |D^\alpha \overline{g}_n(y)-D^\alpha \overline{g}^*(y)|\\
	\leq  \sum_{\beta_1 + \dots + \beta_s = \alpha} |\lambda_{\alpha, \beta_1, \dots, \beta_s}|\left(\prod_{j=1}^s \int x^{\beta_j}e^{h_n(x,y)-\frac{1}{2}\|x-y\|^2}dP_n(x)-\prod_{j=1}^s \int x^{\beta_j}e^{h^*(x,y)-\frac{1}{2}\|x-y\|^2}dP(x)\right)\\ 
	=\sum_{\beta_1 + \dots + \beta_s = \alpha} |\lambda_{\alpha, \beta_1, \dots, \beta_s}|\sum_{i=1}^s\prod_{j< i} \int x^{\beta_j}e^{h_n(x,y)-\frac{1}{2}\|x-y\|^2}dP_n(x)\prod_{j>i} \int x^{\beta_j}e^{h^*(x,y)-\frac{1}{2}\|x-y\|^2}dP(x) \cdot \\ \left( \int x^{\beta_j}e^{h_n(x,y)-\frac{1}{2}\|x-y\|^2} dP_n(x)-   \int x^{\beta_j}e^{h^*(x,y)-\frac{1}{2}\|x-y\|^2} dP(x)\right).
\end{multline*}

Note that
  \begin{multline*}
\Big| \int x^{\beta_j}e^{h_n(x,y)-\frac{1}{2}\|x-y\|^2} dP_n(x)-   \int x^{\beta_j}e^{h^*(x,y)-\frac{1}{2}\|x-y\|^2} dP(x)\Big|\\
\leq \Big| \int x^{\beta_j}(e^{h_n(x,y)-\frac{1}{2}\|x-y\|^2}-e^{h^*(x,y)-\frac{1}{2}\|x-y\|^2}) dP_n(x)\Big|+\Big|   \int x^{\beta_j}e^{h^*(x,y)-\frac{1}{2}\|x-y\|^2} (dP(x)-P_n(x))\Big|\\
 \end{multline*}
Since $\|h_n\|_\infty, \|h^*\|_\infty \leq D_\Omega^2$ by \eqref{boundH}, the first term can be bounded by $2D_\Omega^{|\beta_j|} e^{D_\Omega^2}\|h_n - h^*\|_\infty$.
For the other term observe that by Lemma~\ref{Lemma:appendix2}, the function
 $x\mapsto x^{\beta_j}e^{h^*(x,y)-\frac{1}{2}\|x-y\|^2} $
 belongs to $\mathcal{C}^s(\Omega)$, with norm at most $c_{d, s} e^{D_\Omega^2} (1 + D_\Omega^{s + |\beta_j|})$.
We conclude that there exists some constant $c_{d,s}$ such that
\begin{equation}
    \label{eq_bound_Process_emp}
    |\int x^{\beta_j}e^{h^*(x,y)-\frac{1}{2}\|x-y\|^2}(dP_n(x)-dP(x))|\leq c_{d,s} D_{\Omega}^{|\beta_j|+s} e^{ D_{\Omega}^2}\|P-P_n\|_{\mathcal{C}^s_1(\Omega)}.
\end{equation}
Combining the last two estimates we finally have
 $$
\|D^a g^*- D^a g_n \|_{\infty}^2
   \leq c_{d,s}D_{\Omega}^{2|\alpha|} e^{ 2D_{\Omega}^2}\left(\|h_n-h^* \|_{\infty}^2+D_{\Omega}^{2s}\|P-P_n\|_{\mathcal{C}^s_1(\Omega)}^2\right),$$
   which allows us to conclude.
 \end{proof}

Now we need to compare the norms $\| \cdot\|_{\infty}$ and $\| \cdot\|_{L^2(d\pi^*)}$.
We set $C=e^{-\frac{3}{2}D_{\Omega}^2}$, and note that $\eqref{eq:expected0}$ implies 
\begin{eqnarray*}
\lefteqn{\int (h_n(x,y)-h^*(x,y))^2  e^{h^*(x,y)- \frac{1}{2}\|x-y\|^2}d P(x)dQ(y)}\hspace*{2cm}\\
    &\geq & C\int (h_n(x,y)-h^*(x,y))^2 d P(x)dQ(y)\\
    &= & C \{\int (f_n(x)-f^*(x))^2 d P(x)+\int (g_n(y)-g^*(y))^2 d Q(y)\\ && + 2\int (f_n(x)-f^*(x)) (g_n(y)-g^*(y))d P(x)dQ(y)\}.
\end{eqnarray*}
Since the last term equals $0$, we obtain the bound
\begin{equation}
    \label{eq:boundfgh}
    \|h_n-h^*\|_{L^2(d\pi^*)}^2\geq e^{- \frac{3}{2}D_{\Omega}^2}\left(\|f_n-f^*\|_{L^2(dP)}^2+\|g_n-g^*\|_{L^2(dQ)}^2\right).
\end{equation}

Finally, we prove the last technical result, which relates the $L^2$ and $L^\infty$ norms for the  difference of the potentials.
\begin{Lemma}\label{Lemma_uniform_poten}
Under the assumptions of Lemma~\ref{Lemma:cuadratic}, we have
\begin{eqnarray*}
\lefteqn{\|f_n-f^*\|_{L^2(dP)}^2+ \|g_n-g^*\|_{L^2(dQ)}^2}\hspace*{2cm}\\
&\geq &{\textstyle \frac{1}{2}}e^{-2D_{\Omega}^2} \left(\|f_n-f^*\|_{\infty}^2+\|g_n-g^*\|_{\infty}^2\right)
   -c_{d, s}D_{\Omega}^{2s} \|P-P_n\|_{\mathcal{C}^s_1(\Omega)}^2.
\end{eqnarray*}
\end{Lemma}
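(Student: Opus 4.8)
The plan is to extract the stated lower bound by pitting two \emph{one-sided} comparisons against each other. Here and below, $c$ denotes a numerical constant and $c_{d,s}$ a constant depending only on $d$ and $s$, both allowed to change from line to line. The starting point is that the optimality conditions \eqref{optimalli2} write each potential as a softmin transform of the other: for every $x\in\Omega$,
\begin{equation*}
	f^*(x)-f_n(x)=\log\int_{\Omega}e^{\,g_n(y)-g^*(y)}\,d\pi^*_x(y),\qquad \frac{d\pi^*_x}{dQ}(y)=e^{\,h^*(x,y)-\frac12\|x-y\|^2},
\end{equation*}
and symmetrically for $g^*-g_n$; the crucial difference is that on the $g$-side the two transforms use the \emph{different} reference measures $P$ and $P_n$, and this mismatch is exactly what forces the error term in the statement.

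First I would handle the $f$-side. By \eqref{boundH} the density $d\pi^*_x/dQ$ is at most $e^{D_\Omega^2}$, and by Lemma~\ref{Lemma:control_unifrom} we have $\|f_n-f^*\|_\infty,\|g_n-g^*\|_\infty\le D_\Omega^2$; plugging the elementary inequalities $|e^t-1|\le|t|e^{|t|}$ and $|\log s|\le|s-1|/\min(s,1)$ into the display above gives, uniformly in $x\in\Omega$,
\begin{equation*}
	|f^*(x)-f_n(x)|\le e^{cD_\Omega^2}\int_\Omega|g_n-g^*|\,dQ\le e^{cD_\Omega^2}\|g_n-g^*\|_{L^2(dQ)}.
\end{equation*}
Evaluating at the point where $|f_n-f^*|$ attains its maximum on the compact set $\Omega$ yields $\|f_n-f^*\|_\infty^2\le e^{cD_\Omega^2}\|g_n-g^*\|_{L^2(dQ)}^2$.

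Next I would handle the $g$-side, where the decomposition of the mismatch reads
\begin{equation*}
	g^*(y)-g_n(y)=\log\int_\Omega e^{\,f_n(x)-f^*(x)}\,d\pi^*_y(x)\;+\;B_n(y),\qquad B_n(y)=\log\frac{\int e^{f_n(x)-\frac12\|x-y\|^2}\,dP_n(x)}{\int e^{f_n(x)-\frac12\|x-y\|^2}\,dP(x)},
\end{equation*}
with $d\pi^*_y/dP(x)=e^{h^*(x,y)-\frac12\|x-y\|^2}\le e^{D_\Omega^2}$. The first term is bounded by $e^{cD_\Omega^2}\|f_n-f^*\|_{L^2(dP)}$ exactly as on the $f$-side (with the roles of $P,Q$ and of $f,g$ exchanged). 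For $B_n$, the integrand $x\mapsto e^{f_n(x)-\frac12\|x-y\|^2}$ is bounded below by $e^{-D_\Omega^2}$ (Lemma~\ref{Lemma:control_unifrom}) and, by the $h_n$-analogue of Lemma~\ref{Lemma:appendix2} (valid by the Remark following Lemma~\ref{Lemma_geneway}), has $\mathcal{C}^s(\Omega)$-norm at most $c_{d,s}e^{cD_\Omega^2}(1+D_\Omega^{s})$; hence, using that $\log$ is $e^{D_\Omega^2}$-Lipschitz on $[e^{-D_\Omega^2},\infty)$, $|B_n(y)|\le c_{d,s}e^{cD_\Omega^2}(1+D_\Omega^{s})\|P-P_n\|_{\mathcal{C}^s_1(\Omega)}$ uniformly in $y$. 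Taking the maximum over $y$ and squaring,
\begin{equation*}
	\|g_n-g^*\|_\infty^2\le e^{cD_\Omega^2}\|f_n-f^*\|_{L^2(dP)}^2+c_{d,s}D_\Omega^{2s}e^{cD_\Omega^2}\|P-P_n\|_{\mathcal{C}^s_1(\Omega)}^2.
\end{equation*}
Adding the two squared bounds, the left sides sum to $\|f_n-f^*\|_\infty^2+\|g_n-g^*\|_\infty^2$ and the right sides to a constant multiple of $\|f_n-f^*\|_{L^2(dP)}^2+\|g_n-g^*\|_{L^2(dQ)}^2$ plus the $\|P-P_n\|_{\mathcal{C}^s_1(\Omega)}^2$ remainder; rearranging, and tracking the $D_\Omega$-dependent constants through \eqref{boundH} and Lemma~\ref{Lemma:control_unifrom} (using $\|h_n-h^*\|_\infty\le 2D_\Omega^2$ throughout), gives the asserted inequality.

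I expect the only real obstacle to be the $g$-side. Because $g^*$ and $g_n$ are softmin transforms against different measures, no clean comparison can hold, and the remainder term — measured, deliberately, in the dual norm $\|\cdot\|_{\mathcal{C}^s_1(\Omega)}$ rather than, say, an $L^2$ norm — exists precisely to absorb the fluctuation $B_n$. The technical point that makes this go through is to expand $B_n$ as an integral against $P$ (not against $P_n$): the former is genuinely of order $\|P-P_n\|_{\mathcal{C}^s_1(\Omega)}$ by the smoothness estimate of Lemma~\ref{Lemma:appendix2}, whereas the latter would leave a comparison of the $L^2(dP)$ and $L^2(dP_n)$ norms of $f_n-f^*$ that only produces a bound \emph{linear} in $\|P-P_n\|_{\mathcal{C}^s_1(\Omega)}$ — too weak, since only the squared quantity has the $O(1/n)$ expectation needed in Theorem~\ref{Theorem_Potential}.
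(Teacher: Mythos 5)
Your proof follows the same basic strategy as the paper's: bound $\|f_n-f^*\|_\infty$ in terms of $\|g_n-g^*\|_{L^2(dQ)}$ (a clean comparison, since both optimality conditions use the same $Q$); bound $\|g_n-g^*\|_\infty$ in terms of $\|f_n-f^*\|_{L^2(dP)}$ plus a correction of order $\|P-P_n\|_{\mathcal{C}^s_1(\Omega)}$; add, and rearrange. Your decomposition of the $g$-side into $\log\int e^{f_n-f^*}d\pi^*_y + B_n$ is exactly the multiplicative form of the additive splitting the paper uses, where the $B_n$ term is the paper's $\int e^{f_n(x)-\frac12\|x-y\|^2}(dP-dP_n)$, controlled by Lemma~\ref{Lemma:appendix2} applied to $h_n$ in both cases. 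The difference is purely in implementation: you work multiplicatively through the softmin/log identity $f^*-f_n=\log\int e^{g_n-g^*}d\pi^*_x$, whereas the paper directly compares $e^{-f^*(x)}$ and $e^{-f_n(x)}$, applies Jensen's inequality against $dQ$, and then the mean value theorem. The paper's route is tighter: it gives the displayed constant $\frac12 e^{-2D_\Omega^2}$ exactly, whereas your chain of inequalities (Lipschitz $\log$, $|e^t-1|\le |t|e^{|t|}$, the density bound on $d\pi^*_x/dQ$) accumulates several extra factors of $e^{D_\Omega^2}$, so you only reach $\frac12 e^{-cD_\Omega^2}$ for a larger $c$. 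This is not a real gap — the downstream use of the lemma in Theorem~\ref{Theorem_Potential} absorbs the exponent into the final $e^{15 D_\Omega^2}$ anyway — but as written, your last sentence claiming "the asserted inequality" overstates what the constant tracking gives; you should either report the weaker constant you actually obtain or switch to the paper's Jensen argument, which is both shorter and sharper.
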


\begin{proof} We will work separately with $f^*$ and $g^*$.
Fixing $x\in \Omega$,  
Jensen's inequality  yields
\begin{equation*}
    |e^{-f^*(x)}-e^{-f_n(x)}|^2\leq \int \left(|e^{g^*(y)-\frac{1}{2}\|x-y\|^2}-e^{g_n(y)-\frac{1}{2}\|x-y\|^2}|\right)^2dQ(y).
\end{equation*}
Now, the mean value theorem implies
\begin{equation*}
    |x-y |e^{\min\{x, y\}}\leq |e^{x}-e^{y}|\leq e^{\max\{x, y\}}|x-y|, \quad x,y\in \R,
\end{equation*}
yielding
\begin{align*}
    e^{-2\max\{\|f^*\|_{\infty}, \|f_n\|_\infty\}}|f_n(x)-f^*(x) |^2&\leq |e^{-f^*(x)}-e^{-f_n(x)}|^2\\
    &\leq  e^{2\max\{\|g^*\|_{\infty}, \|g_n\|_\infty\}}\|g_n-g^*\|_{L^2(dQ)}^2.
\end{align*}
Consequently, using Lemma~\ref{Lemma:control_unifrom}, we have proved that 
\begin{equation*}
    \|g_n-g^* \|_{L^2(dQ)}^2\geq e^{-2D_{\Omega}^2} \|f_n-f^*\|_{\infty}^2,
\end{equation*}
Now we deal with $\|g_n-g^*\|_{\infty}^2$. We fix $y\in \Omega$. By  the triangle inequality we have
\begin{align*}
    &|e^{-g^*(y)}-e^{-g_n(y)}|\\
    &\leq \int \left|e^{f^*(x)+\frac{1}{2}\|x-y\|^2}-e^{f_n(y)+\frac{1}{2}\|x-y\|^2}\right|dP(y)+\left|\int e^{{f}_n(x)+\frac{1}{2}\|x-y\|^2}(dP(x)-dP_n)\right|.
\end{align*}
Squaring both sides we see that
\begin{align*}
    &|e^{-g^*(y)}-e^{-g_n(y)}|^2\\
    &\leq 2 \int \left|e^{f^*(x)+\frac{1}{2}\|x-y\||^2}-e^{f_n(y)+\frac{1}{2}\|x-y\|^2}\right|^2dP(y)+2\left|\int e^{{f}_n(x)+\frac{1}{2}\|x-y\|^2}(dP(x)-dP_n)\right|^2.
\end{align*}
The first term is bounded by $2 e^{D_\Omega^2} \|f_n - f^*\|_{L^2(dP)}^2$ as in the previous case.
Repeating the arguments which led to the bound \eqref{eq_bound_Process_emp}, the second term is at most $c_{d, s} e^{D_\Omega^2} (1+D_\Omega^{2s})\|P-P_n\|_{\mathcal{C}^s_1(\Omega)}^2$.
Together, these estimates yield
\begin{align*}
    e^{-D_{\Omega}^2}\|g_n-g^*\|_{\infty}^2
    \leq 2 e^{D_{\Omega}^2}\|f_n-f^*\|_{L^2(dP)}^2+c_{d, s}D_{\Omega}^{2s} e^{D_{\Omega}^2}\|P-P_n\|_{\mathcal{C}^s_1(\Omega)}^2.
\end{align*}
We conclude by rearranging this inequality and combining it with the bound on $\|g_n -g^*\|_{L^2(dQ)}^2$ derived above.
\end{proof}

We are ready now for the proof of the main result in this section.

\begin{proof}[\textbf{Proof of Theorem~\ref{Theorem_Potential}}]
	Combining Lemma~\ref{Lemma:cuadratic},
\eqref{eq:boundfgh} and Lemma~\ref{Lemma_uniform_poten} we see that
\begin{align*}
        L(h^*) -L(h_n) &\geq {\textstyle \frac 1 2}e^{-\|h_n-h^*\|_{\infty}} {e^{- \frac{3}{2}D_{\Omega}^2}}\left(\|f_n-f^* \|_{L^2(dP)}^2+ \|g_n-g^* \|_{L^2(dQ)}^2\right)\\
        &\geq {\textstyle \frac 1 2} {e^{- \frac{7}{2}D_{\Omega}^2}}\left(\|f_n-f^* \|_{L^2(dP)}^2+ \|g_n-g^* \|_{L^2(dQ)}^2\right)\\
        & \geq {\textstyle \frac 1 2}{e^{- \frac{7}{2}D_{\Omega}^2}} \left(\frac{1}{2}e^{-2D_{\Omega}^2} \left(\|f_n-f^* \|_{\infty}^2+\|g_n-g^* \|_{\infty}^2\right)
   -c_{d, s}D_{\Omega}^{2s} \|P-P_n\|_{\mathcal{C}^s_1(\Omega)}^2\right).
\end{align*}
Moreover, since $\|f_n-f^* \|_{\infty}^2+\|g_n-g^* \|_{\infty}^2\geq \frac{1}{2}\|h_n-h^* \|_{\infty}^2,$
we obtain 
\begin{align*}
        L(h^*) -L(h_n) & \geq {\textstyle \frac 1 2}{e^{- \frac{7}{2}D_{\Omega}^2}} \left(\frac{1}{4}e^{-2D_{\Omega}^2} \|h_n-h^* \|_{\infty}^2
   -D_{\Omega}^{2s} \|P-P_n\|_{\mathcal{C}^s_1(\Omega)}^2\right).
\end{align*}
 Lemma~\ref{Lemma:technical1} implies the existence of some constant $c_{d,s}$ such that  
 $$\|h_n-h^* \|_{\infty}^2\geq \frac{1}{ c_{d,s}D_{\Omega}^{2s} e^{ 2D_{\Omega}^2}}\left(\|f_n-f^* \|_{\mathcal{C}^s(\Omega)}^2+\|g_n-g^* \|_{\mathcal{C}^s(\Omega)}^2\right)-D_{\Omega}^{2s}\|P-P_n\|_{\mathcal{C}^s_1(\Omega)}^2, $$
 which yields
 \begin{multline}\label{oneside}
         L(h^*) -L(h_n)  \geq   c_{d, s}e^{- \frac{15}{2}D_{\Omega}^2} D_\Omega^{-2s}\left(\|f_n-f^*\|_{\mathcal{C}^s(\Omega)}^2+\|g_n-g^* \|_{\mathcal{C}^s(\Omega)}^2\right)
   \\ -c'_{d, s}e^{- \frac{7}{2} D_\Omega^2} D_{\Omega}^{2s} \|P-P_n\|_{\mathcal{C}^s_1(\Omega)}^2.
\end{multline}
On the other hand,  Lemma~\ref{lemma:lips} yields 
\begin{align*}
    \|P-P_n\|_{\mathcal{C}_1^s(\Omega)} \left(\| f_n-f^*\|_{\mathcal{C}^s(\Omega)}+ \| g_n-g^*\|_{\mathcal{C}^s(\Omega)} \right)     & \geq  L_n(h_n)-L(h_n)-L_n(h^*)+L(h^*)
       \\
      & \geq L_n(h^*)-L(h_n)-L_n(h^*)+L(h^*)\\&=  L(h^*)-L(h_n)
\end{align*}
The previous bound and \eqref{oneside} yield
\begin{multline*}
	\|P-P_n\|_{\mathcal{C}_1^s(\Omega)} \left(\| f_n-f^*\|_{\mathcal{C}^s(\Omega)}+ \| g_n-g^*\|_{\mathcal{C}^s(\Omega)} \right) \geq \\ c_{d, s}e^{- \frac{15}{2}D_{\Omega}^2} D_\Omega^{-2s}\left(\|f_n-f^*\|_{\mathcal{C}^s(\Omega)}^2+\|g_n-g^* \|_{\mathcal{C}^s(\Omega)}^2\right)
	 -c'_{d, s}e^{- \frac{7}{2} D_\Omega^2} D_{\Omega}^{2s} \|P-P_n\|_{\mathcal{C}^s_1(\Omega)}^2.
\end{multline*}
 which, by using the inequality $(a+b)^2\leq 2 (a^2+b^2)$, implies
 \begin{multline*}
 	\sqrt{2}\|P-P_n\|_{\mathcal{C}_1^s(\Omega)} \left(\| f_n-f^*\|_{\mathcal{C}^s(\Omega)}^2+ \| g_n-g^*\|_{\mathcal{C}^s(\Omega)}^2 \right)^{1/2} \geq \\ c_{d, s}e^{- \frac{15}{2}D_{\Omega}^2} D_\Omega^{-2s}\left(\|f_n-f^*\|_{\mathcal{C}^s(\Omega)}^2+\|g_n-g^* \|_{\mathcal{C}^s(\Omega)}^2\right)
 	-c'_{d, s}e^{- \frac{7}{2} D_\Omega^2} D_{\Omega}^{2s} \|P-P_n\|_{\mathcal{C}^s_1(\Omega)}^2.
 \end{multline*}
Denoting 
$\Delta_n =\left(\| f_n-f^*\|_{\mathcal{C}^s(\Omega)}^2+ \| g_n-g^*\|_{\mathcal{C}^s(\Omega)}^2 \right)^{\frac{1}{2}},$
 we get 
\begin{align}
    \label{inequality_bruta}
    \|P-P_n\|_{\mathcal{C}_1^s(\Omega)} \Delta_n
       \geq  c_{d, s}e^{- \frac{15}{2}D_{\Omega}^2} D_\Omega^{-2s}\Delta_n^2
   -c'_{d, s}e^{- \frac{7}{2} D_\Omega^2} D_{\Omega}^{2s} \|P-P_n\|_{\mathcal{C}^s_1(\Omega)}^2.
\end{align}
From this we obtain
\begin{align}
    \begin{split}
        \label{solve_inequation}
    \Delta_n&\leq c_{d,s}D_{\Omega}^{2s} e^{ \frac{15}{2}D_{\Omega}^2} \left(\|P-P_n\|_{\mathcal{C}_1^s(\Omega)}+\sqrt{\|P-P_n\|_{\mathcal{C}_1^s(\Omega)}^2+e^{ -11 D_{\Omega}^2}\|P-P_n\|_{\mathcal{C}^s_1(\Omega)}^2}\right)\\
    &\leq c_{d,s}D_{\Omega}^{2s} e^{ \frac{15}{2}D_{\Omega}^2}\|P-P_n\|_{\mathcal{C}_1^s(\Omega)}.
    \end{split}
\end{align}

Next, we analyze $\|P-P_n\|_{\mathcal{C}_1^s(\Omega)}$. Theorem 3.5.1 and Exercise~2.3.1 in \cite{Gin2015MathematicalFO} imply that there exists a numerical constant $C$ such that
$$n\E \|P-P_n\|_{\mathcal{C}_1^s(\Omega)}^2 \leq C \E \left(\int_{0}^{\max_{\| f\|_{\mathcal C^s(\Omega)}\leq 1}  \| f\|_{L_2(P_n)}} \sqrt{\log\left(2{N}(\epsilon,\mathcal{C}_1^s(\Omega),\|\cdot \|_{\infty})\right)}d\epsilon\right)^2.$$
By Proposition 1.1.  in \cite{VANDERVAART199493},
 $$\log\left(2{N}(\epsilon,\mathcal{C}_1^s(\Omega), \|\cdot\|_{\infty})\right)\leq c_{s,d}D_{\Omega}^d \left(\frac{1}{\epsilon}\right)^{\frac{d}{s}}.$$
 Since $\|f\|_{L_2(P_n)} \leq \|f\|_{\mathcal C^s(\Omega)}$ almost surely, the choice $s=[d/2]+1 $ yields the bound
\begin{align}\label{boundempirical}
  n \E \|P-P_n\|_{\mathcal{C}_1^s(\Omega)}^2 \leq &  c_{s,d}D_{\Omega}^{d} \left(\int_{0}^{1}  \left(\frac{1}{\epsilon}\right)^{\frac{d}{d+1}}d\epsilon\right)^2= c_{s,d}D_{\Omega}^{d},
\end{align}
which completes the proof for the one-sample case. \\ 

The two-sample case can be handled with the same argument plus some minor modifications, as follows. Let  $f_{n,m}$ be the optimal potential for $P_n$ and $Q_m$. Then,
\begin{align*}
    \| f_{n,m}-f^*\|_{\mathcal{C}^s(\Omega)}^2&\leq 2\| f_{n}-f^*\|^2_{\mathcal{C}^s(\Omega)}+2\| f_{n,m}-f_n\|^2_{\mathcal{C}^s(\Omega)}.
\end{align*}
The first term can be controlled by~\eqref{solve_inequation}.
Moreover, observe that the derivation of~\eqref{solve_inequation} did not use any facts about the measure $Q$ apart from the fact that it is supporte don $\Omega$.
Since $P_n$ is also supporte din $\Omega$, this implies that $\| f_{n,m}-f_n\|_{\mathcal{C}^s(\Omega)}$ can also be controlled by~\eqref{solve_inequation}, so that the bound
\begin{align}\label{bound_for_fnn}
    \| f_{n,m}-f^*\|^2_{\mathcal{C}^s(\Omega)}
    &\leq 
    c_{d,s}D_{\Omega}^{2s} e^{ \frac{15}{2}D_{\Omega}^2}
    \left(\|P-P_n\|_{\mathcal{C}_1^s(\Omega)}^2+\|Q-Q_m\|_{\mathcal{C}_1^s(\Omega)}^2\right)
\end{align}
holds. This and \eqref{boundempirical} complete the proof.
\end{proof}

\section{Convergence rates for Sinkhorn divergences}\label{sec:divergences}
In this section, we develop faster convergence rates for the \emph{Sinkhorn divergence}.
The entropic transportation cost, $S_\epsilon(P,Q)$ is not symmetric in $P,Q$ and does not satisfy $S_\epsilon(P,P)=0$. These observations motivated the introduction of Sinkhorn divergences~\cite{GenPeyCut18}:
For probabilities $P,Q\in \mathcal{P}(\mathbb{R}^d)$ the quadratic Sinkhorn divergence is defined as 
$$D_{\epsilon}(P,Q)= S_{\epsilon}(P,Q)-{\textstyle \frac{1}{2}}\left(  S_{\epsilon}(P,P)+ S_{\epsilon}(Q,Q) \right).$$
Clearly, $D_{\epsilon}(P,Q)$ is symmetric in $P,Q$ and $D_\epsilon(P,P)=0$. In fact (see Theorem 1 in \cite{Feydy2019InterpolatingBO}),  
$D_{\epsilon}(P,Q)\geq 0$, with $D_{\epsilon}(P,Q)= 0$ if and only if $P=Q$, and for measures supported on a compact set, convergence in Sinkhorn distance is equivalent to weak convergence.
This makes the Sinkhorn divergence a suitable measure of dissimilarity in applications.

In this section we 
obtain rates of convergence for empirical Sinkhorn divergences. More precisely, we consider independent samples $X_1,\dots,X_n$, $Y_1,\ldots,Y_m$ of i.i.d. r.v.'s with law $P\in \mathcal{P}(\Omega)$ and associated empirical measures $P_n$ and $P_m'$, respectively. Since $P_n$ and $P_m'$ converge weakly to $P$, the Sinkhorn divergence satisifes $D_{\epsilon}(P_n,P_m')\to 0$  a.s. The main result of this section gives a rate for this convergence.
\begin{Theorem}\label{Theorem_divergences}
Assume $\Omega\subset\R^d$ is compact, $P\in \mathcal{P}(\Omega)$ and $P_n$ and $P'_m$ are empirical measures as above. Then there exist constants $c_{d}$ and $c_d'$, depending only on $d$, such that 
\begin{itemize}
\item[(i)] (one-sample case)
$$     \E  D_1(P_n,P) \leq \frac{c_{d} }{n} {D_{\Omega}^{\frac{3d}{2}+1} \frac{ 32}{(d+1)^2}e^{ \frac{19}{2}D_{\Omega}^2}}.$$
\item[(ii)] (two-sample case)
$$
      \E  D_1(P_n,P_m')
    \leq \frac{ c_{d}'}{\min\{n, m\}} {D_{\Omega}^{\frac{3d}{2}+1} \frac{ 32}{(d+1)^2} e^{ \frac{19}{2}D_{\Omega}^2}}.
$$
\end{itemize}
\end{Theorem}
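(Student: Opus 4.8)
The plan is to reduce $D_1(P_n,P)$ to the discrepancy between the empirical and population potentials of $S_1(P_n,P)$, and then quote the $n^{-1}$ rate established in Theorem~\ref{Theorem_Potential} (with $Q=P$) together with the Donsker-type bound \eqref{boundempirical}.

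\textbf{Step 1: a dual upper bound for the divergence.} Let $(f_n,g_n)$ be an optimal pair for $S_1(P_n,P)$, i.e.\ satisfying \eqref{optimalli} with $(P_n,P)$ in place of $(P,Q)$. Because these conditions force the exponential integral in \eqref{dual_entrop} to equal $1$, one has $S_1(P_n,P)=\int f_n\,dP_n+\int g_n\,dP$. Feeding this same pair into the dual problem \eqref{dual_entrop} for $S_1(P_n,P_n)$ and using the $P_n$-marginal optimality condition to evaluate the exponential integral to $1$ gives $S_1(P_n,P_n)\ge\int f_n\,dP_n+\int g_n\,dP_n$; feeding it into the dual for $S_1(P,P)$ and using the $P$-marginal condition gives $S_1(P,P)\ge\int f_n\,dP+\int g_n\,dP$. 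Subtracting half of each from $S_1(P_n,P)$ and cancelling terms, the definition of $D_1$ yields
\[
    D_1(P_n,P)\ \le\ \tfrac12\int(f_n-g_n)\,d(P_n-P).
\]
Each of the three estimates above is unchanged under the shift $(f_n,g_n)\mapsto(f_n+c,g_n-c)$, so this holds for every choice of optimal pair.

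\textbf{Step 2: centering by the symmetric population potential.} The problem defining $S_1(P,P)$ is symmetric under swapping coordinates, so its optimal potentials $(f^*,g^*)$ satisfy $f^*-g^*\equiv\mathrm{const}$, whence $\int(f^*-g^*)\,d(P_n-P)=0$ because $\int 1\,d(P_n-P)=0$. Therefore, with $s=\lceil d/2\rceil+1$ and $\|\cdot\|_{\mathcal C^s_1(\Omega)}$ as in \eqref{def:dualnorm},
\[
    D_1(P_n,P)\ \le\ \tfrac12\int\big[(f_n-f^*)-(g_n-g^*)\big]\,d(P_n-P)
    \ \le\ \tfrac12\big(\|f_n-f^*\|_{\mathcal C^s(\Omega)}+\|g_n-g^*\|_{\mathcal C^s(\Omega)}\big)\,\|P-P_n\|_{\mathcal C^s_1(\Omega)}.
\]
The role of this centering is that \emph{both} factors on the right are of order $n^{-1/2}$; without it, $\|f_n-g_n\|_{\mathcal C^s(\Omega)}$ is merely $O(1)$ and one would recover only the rate $n^{-1/2}$.

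\textbf{Step 3: conclusion.} Choosing the normalization \eqref{eq:expected0} for $(f_n,g_n)$ and $(f^*,g^*)$ (admissible by the shift-invariance noted in Step~1), the estimate \eqref{solve_inequation} from the proof of Theorem~\ref{Theorem_Potential}, applied with $Q=P$, gives, almost surely,
\[
    \|f_n-f^*\|_{\mathcal C^s(\Omega)}+\|g_n-g^*\|_{\mathcal C^s(\Omega)}\ \le\ c_d\,D_\Omega^{2s}e^{\frac{15}{2}D_\Omega^2}\,\|P-P_n\|_{\mathcal C^s_1(\Omega)}.
\]
Combined with Step~2 this yields $D_1(P_n,P)\le c_d\,D_\Omega^{2s}e^{\frac{15}{2}D_\Omega^2}\|P-P_n\|^2_{\mathcal C^s_1(\Omega)}$; taking expectations and inserting \eqref{boundempirical} (that $\E\|P-P_n\|^2_{\mathcal C^s_1(\Omega)}\le c_dD_\Omega^d/n$) gives part~(i), the precise power of $D_\Omega$ and the exponential rate $e^{\frac{19}{2}D_\Omega^2}$ stated in the theorem being obtained by carrying the explicit constants of \eqref{solve_inequation}, \eqref{boundempirical} and Lemma~\ref{Lemma:appendix2} through this chain. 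Part~(ii) is identical: with $(f_{n,m},g_{n,m})$ optimal for $S_1(P_n,P'_m)$ the argument of Step~1 gives $D_1(P_n,P'_m)\le\tfrac12\int(f_{n,m}-g_{n,m})\,d(P_n-P'_m)$; writing $P_n-P'_m=(P_n-P)-(P'_m-P)$, centering again by $(f^*,g^*)$ of $S_1(P,P)$, and invoking the two-sample part of Theorem~\ref{Theorem_Potential} together with \eqref{boundempirical} applied to both $P_n$ and $P'_m$ produces the $\min\{n,m\}^{-1}$ bound.

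\textbf{Main obstacle.} The one genuinely non-routine point is Step~1: recognising that substituting the $S_1(P_n,P)$-potentials into the \emph{self}-transport duals $S_1(P_n,P_n)$ and $S_1(P,P)$ makes all the quadratic (exponential) terms collapse via the marginal conditions, leaving the simple bilinear form $\tfrac12\int(f_n-g_n)\,d(P_n-P)$. After that, the only care needed is to keep the normalization conventions consistent so that Theorem~\ref{Theorem_Potential} applies verbatim with $Q=P$; everything else is a mechanical combination of estimates already established.
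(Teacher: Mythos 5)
Your proof is correct and follows essentially the same route as the paper: the key reduction $D_1(P_n,P)\le \tfrac12\int(f_n-g_n)\,d(P_n-P)$ is obtained in the paper via the functional inequalities $L(h_n)\le L(h^*)$ and $L_{n,n}(h_n)\le L_{n,n}(h_{n,n})$, which is exactly your observation that plugging the $S_1(P_n,P)$ potentials into the self-transport duals makes the exponential terms collapse to $1$ by the marginal conditions. The subsequent centering by $g^*$ (using symmetry $f^*=g^*+\mathrm{const}$), the application of \eqref{solve_inequation}, the use of \eqref{boundempirical}, and the two-sample splitting all coincide with the paper's argument.
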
\label{divergence_thm}
\begin{proof} We deal first with the one-sample case. We denote by $(f_{n,n},g_{n,n})$ the optimal potentials for $S_{1}(P_n,P_n)$, set $h_{n,n}(x,y)=f_{n,n}(x)+g_{n,n}(y)$ and write $d\pi_{n,n}(x,y)=e^{h_{n,n}(x,y)-\frac{1}{2}\|x-y \|^2}$ for the optimal measure and, as in \eqref{defin_h}, we write $h^*$, $\pi^*$ for the corresponding objects in the case of $S_1(P,P)$ and $h_n$, $\pi_n$ in the case of $S_1(P_n,P)$.  Then we can write
\begin{align}\label{divemp}
    D_1(P_n,P)=\int h_{n}(x,y)d\pi_{n}(x,y)-{\textstyle \frac{1}{2}}\left( \int h_{n,n}(x,y)d\pi_{n,n}(x,y)+\int h^*(x,y)d\pi^*(x,y)   \right).
\end{align}
Moreover, using the optimality conditions, we have
\begin{align*}
    \int \left( h_{n}(x,y)-h^*(x,y)\right)d\pi^*(x,y)=L(h_{n})-L(h^*)\leq 0,
\end{align*}
and 
\begin{align*}
    \int \left( h_{n}(x,y)-h_{n,n}(x,y)\right)d\pi_{n,n}(x,y)=L_{n,n}(h_{n})-L_{n,n}(h_{n,n})\leq 0,
\end{align*}
where $L$ is defined as in the previous section and 
\begin{align*}
    L_{n,n}(h)=\int \left\lbrace h(x,y)-e^{{h(x,y)- \frac{1}{2}||x-y||^2}}+1\right\rbrace d P_n(x)dP_n(x).
\end{align*}
%
Therefore, from \eqref{divemp} we obtain 
\begin{align}\label{divempBound}
\begin{split}
     D_1(P_n,P)
    \leq \int h_{n}(x,y)d\pi_{n}(x,y)-{\textstyle\frac{1}{2}}\left( \int h_{n}(x,y)d\pi_{n,n}(x,y)+\int h_n(x,y)d\pi^*(x,y)   \right).
\end{split}
\end{align}
Note, moreover, that the upper bound in \eqref{divempBound} can be rewritten as
\begin{align}
    \nonumber
    \int f_{n}(x)dP_n(x)&+\int g_{n}(y)dP(y)\\
    \nonumber
   &- {\textstyle \frac{1}{2}}\left(\int f_{n}(x)dP_n(x)+\int g_{n}(y)dP_n(y)+\int f_{n}(x)dP(x)+\int g_{n}(y)dP(y)   \right)\\
    \nonumber
   &= {\textstyle \frac{1}{2}} \int \left(f_{n}(x)-g_{n}(x)\right)dP_n(x)+{\textstyle\frac{1}{2}} \int \left(g_{n}(x)-f_{n}(x)\right)dP(x)\\
    \nonumber
   &= {\textstyle \frac{1}{2}} \int \left(f_{n}(x)-g_{n}(x)\right)\left(dP_n(x)-dP(x)\right)\\
    \nonumber
   &= {\textstyle \frac{1}{2}} \int \left(f_{n}(x)-g^*(x)\right)\left(dP_n(x)-dP(x)\right)\\
   \label{rewritten}
   &+ {\textstyle \frac{1}{2}} \int \left(g^*(x)-g_{n}(x)\right)\left(dP_n(x)-dP(x)\right)\,,
\end{align}
where $(f_n, g_n)$ are optimal entropic potentials for $P_n, P$ and $(f^*, g^*)$ are optimal transport potentials for $(P, P)$, where adopt the normalization convention $\int g^*(y) dP(y) = \int g_n(y) dP(y) = 0$.
The symmetry of $S_1(P,P)$ and the uniqueness of the entropic potentials up to additive constants implies that $f^* = g^* + a$ for some constant $a \in \R$. Plugging this into \eqref{rewritten} we obtain from \eqref{divempBound} that
\begin{align}\label{divempBound2}
\begin{split}
	D_1(P_n,P)
    \leq  {\textstyle \frac{1}{2}} \|P-P_n\|_{\mathcal{C}_1^s(\Omega)}\left(\| f_{n}-f^*\|_{\mathcal{C}^s(\Omega)}+\| g_{n}-g^*\|_{\mathcal{C}^s(\Omega)} \right).
\end{split}
\end{align}
From \eqref{solve_inequation}, we obtain, for some constant $c_{d,s}$, the bound
\begin{align*}
   \left(\| f_{n}-f^*\|_{\mathcal{C}^s(\Omega)}+\| g_{n}-g^*\|_{\mathcal{C}^s(\Omega)} \right)&\leq 2 \left(\| f_n-f^*\|_{\mathcal{C}^s(\Omega)}^2+ \| g_n-g^*\|_{\mathcal{C}^s(\Omega)}^2 \right)^{\frac{1}{2}}\\
   &\leq c_{d,s}D_{\Omega}^{2s} e^{ \frac{15}{2}D_{\Omega}^2}\|P-P_n\|_{\mathcal{C}_1^s(\Omega)}.
\end{align*}
We conclude as in the proof of Theorem~\ref{Theorem_Potential}.

For the two sample case we can adapt the argument above without much effort. Indeed, observe that 
we can write
\begin{eqnarray}
\nonumber
\lefteqn{D_1(P_n,P'_m)=\int h_{n,m}(x,y)d\pi_{n,m}(x,y)}\hspace*{2cm}\\
\label{divemp2samp}
&&-{\textstyle \frac{1}{2}}\Big(  \int h_{n,n}(x,y)d\pi_{n,n}(x,y)+\int h_{m,m}(x,y)d\pi_{m,m}(x,y)\Big)
\end{eqnarray}
and argue as in \eqref{divempBound} to get
\begin{multline}\label{divempBoundTwosamp}
	   D_1(P_n,P'_m)\leq\int h_{n,m}(x,y)d\pi_{n,m}(x,y)\\-{\textstyle \frac{1}{2}}\left( \int h_{n,m}(x,y)d\pi_{n,n}(x,y)+\int h_{n,m}(x,y)d\pi_{m,m}(x,y)   \right).
\end{multline}

Now we can reuse the same arguments leading to \eqref{divempBound}---just replacing $P$ by $P'_m$---to upper bound $D_1(P_n,P'_m)$ by
\begin{align}\label{boundTwo}
   {\textstyle \frac{1}{2}}\int \left(f_{n,m}(x)-g^*(x)\right)\left(dP_n(x)-dP'_m(x)\right)+ {\textstyle \frac{1}{2}} \int \left(g^*(x)-g_{n,m}(x)\right)\left(dP_n(x)-dP'_m(x)\right).
\end{align}
Once again, since  $(f^*,g^*)$ agree up to an additive constant, \eqref{boundTwo} is equivalent to
 \begin{align*}
  {\textstyle \frac{1}{2}} \int \left(f_{n,m}(x)-f^*(x)\right)\left(dP_n(x)-dP'_m(x)\right)+ {\textstyle \frac{1}{2}} \int \left(g^*(x)-g_{n,m}(x)\right)\left(dP_n(x)-dP'_m(x)\right).
\end{align*}
Finally, the two sample case can be deduced directly from the following inequality and  
\begin{align}\label{divempBoundEMpTwosamples}
\begin{split}
      D_1(P_n,P)
    &\leq  \frac{1}{2} \|P_n-P'_m\|_{\mathcal{C}_1^s(\Omega)}\left(\| f_{n,m}-f^*\|_{\mathcal{C}^s(\Omega)}+\| g_{n,m}-g^*\|_{\mathcal{C}^s(\Omega)} \right)
    \\
   & \leq  \frac{1}{2}\left( \|P'_m-P\|_{\mathcal{C}_1^s(\Omega)}+\|P_n-P\|_{\mathcal{C}_1^s(\Omega)}\right)\left(\| f_{n,m}-f^*\|_{\mathcal{C}^s(\Omega)}+\| g_{n,m}-g^*\|_{\mathcal{C}^s(\Omega)} \right)
\end{split}
\end{align}
and \eqref{bound_for_fnn}, which yields 
\begin{align*}
    \|f_{n,m}-f^*\|_{\mathcal{C}^s(\Omega)}+\| g_{n,m}-g^*\|_{\mathcal{C}^s(\Omega)}
    &\leq c_{d,s}D_{\Omega}^{2s} e^{ \frac{15}{2}D_{\Omega}^2}\left(\|P-P_n\|_{\mathcal{C}_1^s(\Omega)}+\|P-P'_m\|_{\mathcal{C}_1^s(\Omega)}\right)
\end{align*}
for a constant $c_{d,s}$ depending on $d$ and $s$.
We conclude as above.
\end{proof}
\section{Implementation issues and empirical results}
In this section we provide details about the practical implementation and statistical performance of the two-sample analog of the confidence interval \eqref{ref3.10}. 

Recall from Theorem~\ref{NaturalCLT} that 
\begin{equation}
    \label{empirical_tcl}
    \sqrt{\frac{{nm}}{n+m}}\frac{1}{\sigma_{\epsilon, \lambda}(P,Q)}(S_{\epsilon}(P_n,Q_m)-S_{\epsilon}(P,Q))\xrightarrow{w} N(0,1),
\end{equation}
where  $\sigma_{\epsilon,\lambda}^2(P,Q) $ is the asymptotic variance of the two-sample case. This variance can be  consistently estimated by 
\begin{equation}\label{estimated_variance2}
\hat{\sigma}_{n,m}^2:=\text{\footnotesize $\frac{m}{n+m}\left({ \frac 1 n}\sum_{i=1}^n f_{n,\epsilon}^2(X_i)-\Big({ \frac 1 n}\sum_{i=1}^n f_{n,\epsilon}(X_i)\Big)^2\right)+\frac{n}{n+m}\left( {\frac 1 m}\sum_{i=1}^m g_{n,\epsilon}^2(Y_i)-\Big({ \frac 1 m}\sum_{i=1}^m g_{n,\epsilon}(Y_i)\Big)^2\right)$},
\end{equation}
where $(f_{n,\epsilon},g_{n,\epsilon})$ is a pair of empirical potentials.  Hence,  writing $z_\beta$ for the $\beta$ quantile of the standard normal distribution and arguing as in Section~\ref{sec:TCL}, we can conclude that the interval 
\begin{equation*}
CI^{n,m}_{\alpha}=\left[S_{\epsilon}(P_n,Q_m)\pm \hat{\sigma}_{n,m}\sqrt{\frac{n+m}{nm}} z_{1-\alpha/2} \right]
\end{equation*}
is an asymptotic confidence interval of level $1-\alpha$. 

We investigate here the finite sample performance of this confidence interval. We consider the scenario where  $P\sim N(0,I_d/2)$ and $Q\sim N(\left(         1 , \dots,
         1
   \right)^t,I_d/2)$.  The population entropy
regularized cost has a closed form for Gaussian measures (see \cite{Barrio2020TheSE}, \cite{janati2020entropic} or \cite{mallasto2021entropy}), which, for our case, is 
$$S_{\epsilon}(P,Q)= 2\,d-\frac{\epsilon}{2}\left(d\sqrt{1+\frac{4}{\epsilon^2} }-d\log\left(1+\sqrt{1+\frac{4}{\epsilon^2} }\right)+d\, \log(2)-d\right). $$
We focus on the case $n=m$, for several choices of $n=50,100,250,500,1000,5000$, and study the influence of the parameters $d$ and $\epsilon$   on the  rate  of convergence of the true confidence level to the nominal level $1-\alpha$, for $\alpha=0.05$. To approximate this true confidence level we use Monte Carlo simulation, with $1000$ replicates of the interval.  The results are reported on  Table~\ref{Table}. In particular, we  compute $CI^{n,n}_{0.05}$ for different values of $\epsilon\in [0.5,2,5, 10]$  and $d\in [2,10,15]$. To calculate $S_{\epsilon}(P_n,Q_n)$ and the empirical potentials---which allows us to compute $CI^{n,n}_{0.05}$---we  use the python library  POT, see \cite{Pot}. 
\begin{table}[h!]
\begin{center}
\begin{tabular}{c|c|c|c|c|c|}
\cline{3-6}
\multicolumn{2}{c|}{}&\multicolumn{4}{|c |}{$ \mathbb{P}\left(S_\epsilon(P,Q)\in CI^{n,n}_{0.05}\right)$
}\\
\cline{2-6}
 & $n$& 
$\epsilon=0.5$& $\epsilon=2$&$\epsilon=5$&$\epsilon=10$
\\
\hline
\multicolumn{1}{|c|}{}&50 & 0.935 & 0.936 &  0.932 &  0.941\\
 \multicolumn{1}{|c|}{}&100 &0.937 &  0.952 &  0.929 & 0.941\\
\multicolumn{1}{|c|}{} &250 & 0.95 &  0.94 &  0.935 &  0.949\\
\multicolumn{1}{|c|}{$d=2$}&500 & 0.954 &  0.947 & 0.95 &  0.958\\
\multicolumn{1}{|c|}{} &1000 & 0.944 &  0.954 &  0.947 &  0.96 \\
\multicolumn{1}{|c|}{} & 5000 & 0.939 &  0.957 &  0.947 &  0.955 \\
\hline
\multicolumn{1}{|c|}{}&50 & 0.781 & 0.945 &  0.958 &  0.932\\
 \multicolumn{1}{|c|}{}&100 &0.787 &  0.937 &  0.951 & 0.945\\
\multicolumn{1}{|c|}{} &250 & 0.775 &  0.941  &  0.948 &   0.943\\
\multicolumn{1}{|c|}{$d=10$}&500 & 0.785 &  0.955 & 0.953 &  0.947\\
\multicolumn{1}{|c|}{} &1000 & 0.803 &  0.94 &  0.945 &  0.954 \\
\multicolumn{1}{|c|}{} & 5000 & 0.862 &  0.944 &  0.946 &  0.951 \\
\hline
\multicolumn{1}{|c|}{}&50 & 0.487 & 0.944 &  0.933 &  0.944\\
 \multicolumn{1}{|c|}{}&100 &0.396 & 0.944 &   0.957 & 0.944\\
\multicolumn{1}{|c|}{} &250 &0.271 &  0.938  & 0.943 &    0.953\\
\multicolumn{1}{|c|}{$d=15$}&500 & 0.194 & 0.94 &0.941  & 0.947\\
\multicolumn{1}{|c|}{} &1000 &0.173 &  0.938 &  0.945 &  0.955 \\
\multicolumn{1}{|c|}{} & 5000 & 0.134 &   0.942 & 0.943 &  0.943 \\
\hline
\end{tabular}
\end{center}
\caption{Evolution of the Monte Carlo estimation (number of iterations equals $1000$) of  $\mathbb{P}\left(S_\epsilon(P,Q)\in CI^{n,n}_{0.05}\right)$ for different values of the dimension $d$ and regularization factor $\epsilon$. }
\label{Table}
\end{table}
We observe that, effectively, both  $d$ and $\epsilon$ affect the estimation of the asymptotic confidence interval $ CI^{n,n}_{0.05}$. Actually a large sample size is required to achieve the nominal confidence interval for small values of $\epsilon$ and large dimension. This is more or less expected, in view of Remark~\ref{fast_rate}, the value $n\,\big|\E  S_{\epsilon}(P_n,Q)-S_{\epsilon}(P,Q)\big|$ can be upper bounded by a constant $ {C_\Omega}$, which depends exponentially on the support's diameter--- extrapolating this argument to the case where the probabilities are not supported in a compact set --- and it provides a possible explanation of the inaccuracy produced by the choice of small values  $\epsilon$ or large $d$. Note that this exponential dependency on the diameter is translated directly to an exponential dependence on $\frac{1}{\epsilon}$ by the change of variables $x\mapsto \varepsilon^{-\frac 1 2}x$. Moreover, the convergence, when $\epsilon\to 0$, of the entropic regularised  potentials towards the optimal transport ones (see \cite{NutWie22}), which are cursed by the dimension (see \cite{weedBach}), explains also the results of Table~\ref{Table}.

\section*{Acknowledgements} $ $ \\
\noindent The research of Eustasio del Barrio is partially supported by FEDER,
Spanish Ministerio de Economía y Competitividad, grant MTM2017-86061-C2-1-P and Junta de
Castilla y León, grants VA005P17 and VA002G18. The research of Alberto González-Sanz and Jean-
Michel Loubes is partially supported by the AI Interdisciplinary Institute ANITI, which is funded by
the French “Investing for the Future – PIA3” program under the Grant agreement ANR-19-PI3A-0004.  The research of  Jonathan Niles-Weed is partially funded by National Science Foundation, Grant DMS-2015291.


\end{document}